\documentclass[oneeqnum,onethmnum,final]{siamltex}
\usepackage{graphicx}
\usepackage{amsfonts}
\usepackage{amssymb}
\usepackage{amsmath}

\let\tilde\widetilde
\newcommand{\ldiff}[2]{\frac{d #1}{d #2}}
\newcommand{\R}{\mathbb{R}}

\newcommand{\hairspace}{\ifmmode\mskip1mu\else\kern0.08em\fi}

\title{Supplementary Materials:  Proof of Theorem 2.1}

\author{Anthony P. Austin \and Lloyd N. Trefethen}

\pagestyle{myheadings}
\markboth{\uppercase{AUSTIN AND TREFETHEN}}{\uppercase{INTERPOLATION AND QUADRATURE IN PERTURBED POINTS}}

\begin{filecontents}{local.bib}
@phdthesis{DPhilThesis,
	title="Some New Results on and Applications of Interpolation in Numerical Computation",
	author="Anthony P. Austin",
	school="Mathematical Institute, University of Oxford",
	type="{D.\ Phil.}\ thesis",
	year="2016"
}
\end{filecontents}


\begin{document}

\maketitle

\thispagestyle{plain}


In this supplementary appendix, we reproduce the proof of Theorem~2.1 given in
\cite[Ch.\ 4]{DPhilThesis} with a few minor modifications to make the text
self-contained.  For ease of reference, we repeat our basic notational setup
from the main article here.  If $K = 2N + 1$ is an odd integer, the
zero-centered equispaced grid of length $K$ in $[-\pi, \pi)$ consists of the
points
\begin{equation}\label{EQN:EquiPts}
x_k = kh, \qquad -N \leq k \leq N,
\end{equation}
where $h = 2\pi/K$ is the grid spacing.  We consider the perturbed grid
\begin{equation}\label{EQN:PertPts}
\tilde{x}_k = x_k + s_k h, \qquad |s_k| \leq \alpha,
\end{equation}
where the parameter $\alpha$ is a fixed value in the range $0 \leq \alpha <
1/2$.  The $k$th trigonometric Lagrange basis function associated with the
perturbed grid is denoted by $\tilde{\ell}_k$, i.e.
\[
\tilde{\ell}_k(x) = \prod_{\substack{j = -N \\ j \neq k}}^N \frac{\sin\left(\frac{x - \tilde{x}_j}{2}\right)}{\sin\left(\frac{\tilde{x}_k - \tilde{x}_j}{2}\right)}.
\]
We have $\tilde{\ell}_k(x_j) = 1$ if $j = k$ and $0$ if $j \neq k$.  From
(5.2) and (5.3) in the main article, we have
\begin{equation}\label{EQN:PertLebConst}
\tilde{\Lambda}_N = \max_{x \in [-\pi, \pi]} \sum_{k = -N}^N |\tilde{\ell}_k(x)|.
\end{equation}

Our argument can be loosely outlined as follows.  The bulk of the work is
devoted to bounding $|\tilde{\ell}_0(x)|$, which takes several steps to
accomplish.  Taking $x$ as fixed, we determine the choice of the points
$\tilde{x}_j$ that maximizes $|\tilde{\ell}_0(x)|$ and then bound the maximum
using integrals.  Since the resulting bound is independent of $\tilde{x}_j$, we
can exploit symmetry to obtain bounds on $|\tilde{\ell}_k(x)|$ for $k \neq 0$.
We then sum these bounds over $k$ to obtain a bound on $\tilde{\Lambda}_N$.

We begin with the following result, which shows that to bound
$|\tilde{\ell}_0(x)|$ we need consider only grids in which all the points,
possibly aside from $\tilde{x}_0$, are perturbed by the maximum amount of
$\alpha h$.

\medskip

\begin{lemma}
For all $x \in [-\pi, \pi]$ and $-N \leq j \leq N$, $j \neq 0$,
\begin{equation}\label{EQN:l0FactorBound}
\left|\frac{\sin\left(\frac{x - \tilde{x}_j}{2}\right)}{\sin\left(\frac{\tilde{x}_0 - \tilde{x}_j}{2}\right)}\right| \leq \max\left(\left|\frac{\sin\left(\frac{x - (j - \alpha)h}{2}\right)}{\sin\left(\frac{\tilde{x}_0 - (j - \alpha)h}{2}\right)}\right|, \left|\frac{\sin\left(\frac{x - (j + \alpha)h}{2}\right)}{\sin\left(\frac{\tilde{x}_0 - (j + \alpha)h}{2}\right)}\right|\right).
\end{equation}
\end{lemma}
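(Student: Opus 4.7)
The plan is to fix $x$ and $\tilde{x}_0$ and regard the left-hand side of \eqref{EQN:l0FactorBound} as a function of $\tilde{x}_j$ alone, then optimise over its allowed range. Writing $t = \tilde{x}_j$ and
\[
g(t) \;=\; \frac{\sin((x-t)/2)}{\sin((\tilde{x}_0 - t)/2)},
\]
the inequality \eqref{EQN:l0FactorBound} becomes the claim that $|g|$ attains its maximum on the interval $I = [(j-\alpha)h,\,(j+\alpha)h]$ at one of the endpoints $t = (j \pm \alpha)h$.  Before going further I would note that $g$ is continuous on $I$: because $\tilde{x}_0 \in [-\alpha h, \alpha h]$ with $\alpha < 1/2$ and $j \ne 0$, the point $\tilde{x}_0$ lies strictly outside $I$, and the bound $|\tilde{x}_0 - t| < 2\pi$ on $I$ precludes any other zero of the denominator.

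The heart of the argument is a direct derivative computation.  Applying the quotient rule and invoking the identity $\sin a \cos b - \cos a \sin b = \sin(a - b)$, one finds
\[
g'(t) \;=\; \frac{\sin((x - \tilde{x}_0)/2)}{2\,\sin^2((\tilde{x}_0 - t)/2)}.
\]
The crucial observation is that the numerator is independent of $t$, so $g'(t)$ has a single sign---positive, negative, or identically zero---throughout $I$, and hence $g$ is monotonic on $I$.  I expect this cancellation to be the main obstacle: the calculation itself is routine, but essentially the entire lemma rides on the fact that the $t$-dependence drops out of the numerator of $g'$.

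To finish, I would argue that monotonicity of $g$ on $I$ forces $|g|$ to be maximised at an endpoint.  If $g$ has constant sign on $I$ then $|g|$ is itself monotonic, while if $g$ changes sign it does so at a unique interior zero (again by monotonicity), so $|g|$ first decreases to $0$ and then increases; in both cases the maximum is attained at an endpoint.  Evaluating $|g|$ at $t = (j \pm \alpha)h$ then yields precisely the right-hand side of \eqref{EQN:l0FactorBound}.  The only edge case worth an extra glance is $x = \pm\pi$, where $x - t$ can approach $\pm 2\pi$; but since $|\tilde{x}_0 - t| < 2\pi$ on $I$ the denominator still does not vanish, and the monotonicity argument goes through unchanged.
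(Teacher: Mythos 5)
Your proposal is correct and follows essentially the same route as the paper's proof: the same derivative computation showing that $t \mapsto \sin((x-t)/2)/\sin((\tilde{x}_0-t)/2)$ has no critical points on the interval $[(j-\alpha)h,(j+\alpha)h]$, hence attains its extreme values at the endpoints. Your extra care with the non-vanishing of the denominator and with passing from monotonicity of $g$ to the endpoint maximum of $|g|$ is a slightly more detailed write-up of the same argument.
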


\begin{proof}
The statement is trivially true if $x = \tilde{x}_0$.  If $x \neq \tilde{x}_0$,
then from
\[
\ldiff{}{t} \frac{\sin\left(\frac{x - t}{2}\right)}{\sin\left(\frac{\tilde{x}_0 - t}{2}\right)} = \frac{1}{2} \frac{\sin\left(\frac{x - \tilde{x}_0}{2}\right)}{\left[\sin\left(\frac{\tilde{x}_0 - t}{2}\right)\right]^2},
\]
we see that $t \mapsto \sin\bigl((x - t)/2\bigr)/\sin\bigl((\tilde{x}_0 -
t)/2\bigr)$ has no critical points in $[-\pi, \pi]$ apart from $t =
\tilde{x}_0$, where it is singular.  In particular, it has no critical points
in any of the intervals $[(j - \alpha)h, (j + \alpha)h]$ for $-N \leq j \leq
N$, $j \neq 0$, and therefore must assume its extreme values on these intervals
at the endpoints.  Since $\tilde{x}_j \in [(j - \alpha)h, (j + \alpha)h]$ for
each $j$, we are done.
\end{proof}

\medskip

Which of the two arguments to the maximum function on the right-hand side of
\eqref{EQN:l0FactorBound} is larger depends on both $x$ and $j$.  We need to
understand the exact conditions under which each one takes over.  Our first
step in this direction is the following lemma, which tells us when the two are
equal.

\medskip

\begin{lemma}\label{LEM:xjs}
For $0 < \alpha < 1/2$ and $-N \leq j \leq N$, $j \neq 0$, the equation
\begin{equation}\label{EQN:TakeoverEqn}
\left|\frac{\sin\left(\frac{x - (j - \alpha)h}{2}\right)}{\sin\left(\frac{\tilde{x}_0 - (j - \alpha)h}{2}\right)}\right| = \left|\frac{\sin\left(\frac{x - (j + \alpha)h}{2}\right)}{\sin\left(\frac{\tilde{x}_0 - (j + \alpha)h}{2}\right)}\right|
\end{equation}
has exactly two solutions in $[-\pi, \pi]$:  $x = \tilde{x}_0$ and $x = x_j^*$,
where%
\footnote{Here, $\arctan$ denotes the principal branch of the inverse tangent
function.}%
\[
x_j^* = 2 \arctan\left(\frac{\cos(jh) - \cos(\alpha h) + \tan\bigl(\tilde{x}_0/2\bigr)\sin(jh)}{\tan\bigl(\tilde{x}_0/2\bigr)\bigl(\cos(jh) + \cos(\alpha h)\bigr) - \sin(jh)}\right).
\]
\end{lemma}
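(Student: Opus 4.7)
My plan is to eliminate the absolute values in \eqref{EQN:TakeoverEqn} by cross-multiplying and squaring. Since $|\tilde{x}_0| \leq \alpha h$ and, for $j \neq 0$, the points $(j - \alpha)h$ and $(j + \alpha)h$ lie outside $[-\alpha h, \alpha h]$ (because $\alpha < 1/2$), both denominators in \eqref{EQN:TakeoverEqn} are nonzero. The equation is therefore equivalent to
\[
\sin\!\left(\tfrac{x - (j - \alpha)h}{2}\right)\sin\!\left(\tfrac{\tilde{x}_0 - (j + \alpha)h}{2}\right) = \pm\,\sin\!\left(\tfrac{x - (j + \alpha)h}{2}\right)\sin\!\left(\tfrac{\tilde{x}_0 - (j - \alpha)h}{2}\right).
\]
Applying $2\sin A\sin B = \cos(A - B) - \cos(A + B)$ to both sides, the $\cos(A + B)$ pieces share the common argument $(x + \tilde{x}_0 - 2jh)/2$, while the $\cos(A - B)$ pieces become $\cos((x - \tilde{x}_0 \pm 2\alpha h)/2)$. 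I then treat the two sign choices separately.

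\textbf{The ``$+$'' case.} Here the shared $\cos(A+B)$ terms cancel, leaving
\[
\cos\!\left(\tfrac{x - \tilde{x}_0 + 2\alpha h}{2}\right) = \cos\!\left(\tfrac{x - \tilde{x}_0 - 2\alpha h}{2}\right).
\]
Since $0 < \alpha h < \pi$, the only solution in $[-\pi, \pi]$ is $x = \tilde{x}_0$, recovering one of the two asserted roots.

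\textbf{The ``$-$'' case.} Now the $\cos(A+B)$ terms add rather than cancel, producing
\[
\cos\!\left(\tfrac{x - \tilde{x}_0 + 2\alpha h}{2}\right) + \cos\!\left(\tfrac{x - \tilde{x}_0 - 2\alpha h}{2}\right) = 2\cos\!\left(\tfrac{x + \tilde{x}_0}{2} - jh\right).
\]
Sum-to-product collapses the left side to $2\cos((x - \tilde{x}_0)/2)\cos(\alpha h)$. Expanding both sides with angle-addition formulas in $x/2$ and $\tilde{x}_0/2$, collecting the coefficients of $\sin(x/2)$ and $\cos(x/2)$, and dividing through by $\cos(x/2)\cos(\tilde{x}_0/2)$ converts the equation into a single linear relation for $\tan(x/2)$ whose solution reproduces the stated formula for $x_j^*$.

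\textbf{Uniqueness and edge cases.} Since the principal $\arctan$ has range $(-\pi/2, \pi/2)$, the formula returns a unique $x_j^* \in (-\pi, \pi)$, so together with $x = \tilde{x}_0$ from the ``$+$'' case these are exactly the two solutions claimed. The step I expect to be the most delicate is the division by $\cos(x/2)$ in the ``$-$'' case: the endpoints $x = \pm\pi$ correspond to a vanishing denominator in the $\arctan$ expression and must be ruled out by direct substitution, both to ensure that no genuine solution is lost and to confirm that the squaring step did not introduce spurious roots.
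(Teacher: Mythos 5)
Your proposal is correct and follows essentially the same route as the paper's proof: cross-multiply, apply the product-to-sum identity so the $\cos(A+B)$ terms share the argument $(x+\tilde{x}_0)/2 - jh$, split according to the sign of the resulting equality, obtain $x = \tilde{x}_0$ from one branch, and reduce the other branch to a linear equation in $\tan(x/2)$ yielding $x_j^*$. The delicacy you flag about dividing by $\cos(x/2)$ at $x = \pm\pi$ is real but is passed over in the paper's own argument as well.
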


\begin{proof}
Multiplying through by the denominators of both sides and applying some
trigonometric identities, we find that \eqref{EQN:TakeoverEqn} can be
reduced to
\begin{multline}\label{EQN:MaxEqn}
\left|\cos\left(\frac{\tilde{x}_0 - x}{2} + \alpha h\right) - \cos\left(\frac{\tilde{x}_0 + x}{2} - jh\right)\right| \\ = \left|\cos\left(\frac{\tilde{x}_0 - x}{2} - \alpha h\right) - \cos\left(\frac{\tilde{x}_0 + x}{2} - jh\right)\right|.
\end{multline}
If the expressions within the absolute value signs on either side of
\eqref{EQN:MaxEqn} are equal, then we have
\[
\cos\left(\frac{\tilde{x}_0 - x}{2} + \alpha h\right) = \cos\left(\frac{\tilde{x}_0 - x}{2} - \alpha h\right).
\]
In order to solve this equation, we consider two cases.

\emph{\textit{Case 1}:  $(\tilde{x}_0 - x)/2 + \alpha h = (\tilde{x}_0 -
x)/2 - \alpha h + 2n\pi$ for some integer $n$.}  Rearranging gives $\alpha h =
n\pi$, and substituting for $h$, we arrive at $\alpha = Kn/2$.  Since $\alpha <
1/2$, this can hold only if $n = 0$, in which case $\alpha = 0$, but this is
disallowed by our hypotheses.

\emph{\textit{Case 2}: $(\tilde{x}_0 - x)/2 + \alpha h = \alpha h -
(\tilde{x}_0 - x)/2 + 2n\pi$ for some integer $n$.}  If this holds, then
$\tilde{x}_0 - x = 4n\pi$, but this can happen only if $n = 0$, since
$\tilde{x}_0 - x \in [-\pi - \alpha h, \pi + \alpha h]$, and this interval is
contained in $[-2\pi, 2\pi]$ because $\alpha h \leq \pi$.  Thus, $x =
\tilde{x}_0$.

We conclude that $x = \tilde{x}_0$ is the only solution when the expressions
within the absolute value signs on either side of \eqref{EQN:MaxEqn} are
equal.  On the other hand, if they are equal but of opposite sign, we get
\[
2\cos\left(\frac{\tilde{x}_0 + x}{2} - jh\right) = \cos\left(\frac{\tilde{x}_0 - x}{2} + \alpha h\right) + \cos\left(\frac{\tilde{x}_0 - x}{2} - \alpha h\right).
\]
Simplifying the right-hand side to $2\cos\bigl((\tilde{x}_0 -
x)/2\bigr)\cos(\alpha h)$ and then expanding both sides out completely using
trigonometric identities, we find that
\begin{multline*}
\cos\left(\frac{\tilde{x}_0}{2}\right)\cos\left(\frac{x}{2}\right)\cos(jh) - \sin\left(\frac{\tilde{x}_0}{2}\right)\sin\left(\frac{x}{2}\right)\cos(jh) \\ + \sin\left(\frac{\tilde{x}_0}{2}\right)\cos\left(\frac{x}{2}\right)\sin(jh) + \cos\left(\frac{\tilde{x}_0}{2}\right)\sin\left(\frac{x}{2}\right)\sin(jh) \\ = \cos\left(\frac{\tilde{x}_0}{2}\right)\cos\left(\frac{x}{2}\right)\cos(\alpha h) +\sin\left(\frac{\tilde{x}_0}{2}\right)\sin\left(\frac{x}{2}\right)\cos(\alpha h).
\end{multline*}
Dividing both sides of this through by $\cos(\tilde{x}_0/2)\cos(x/2)$ and
rearranging, we obtain
\[
\tan\left(\frac{x}{2}\right) = \frac{\cos(jh) - \cos(\alpha h) + \tan\bigl(\tilde{x}_0/2\bigr)\sin(jh)}{\tan\bigl(\tilde{x}_0/2\bigr)\bigl(\cos(jh) + \cos(\alpha h)\bigr) - \sin(jh)}.
\]
Taking the inverse tangent of both sides and multiplying by $2$, we arrive at
$x = x_j^*$.
\end{proof}

\medskip

To move forward, we need a better understanding of the locations of the points
$x_j^*$.  The requisite inequalities are simple to state and are given in
Lemma~\ref{LEM:xjsBounds}, but first we pause to establish a minor fact
that we will need in their proof.

\medskip

\begin{lemma}\label{LEM:sinjhVSsinah}
For $|t| \leq \alpha$ and $-N \leq j \leq N$, $j \neq 0$,
\[
\bigl|\sin\bigl((j + t)h\bigr)\bigr| > \sin\bigl((\alpha - |t|)h\bigr).
\]
\end{lemma}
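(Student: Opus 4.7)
My plan is to reduce the claim to two elementary inequalities about $|j+t|$, after using the identity $|\sin\theta| = \sin(\mathrm{dist}(\theta,\pi\mathbb{Z}))$ to rewrite the left-hand side. Since the substitution $(j,t) \mapsto (-j,-t)$ preserves both sides, I may assume $j \geq 1$. The bound $|j+t| \leq N + \alpha < N + 1/2 = (2N+1)/2$ then shows that $(j+t)h$ lies strictly in $(-\pi,\pi)$, so the nearest multiple of $\pi$ is either $0$ or $\pm\pi$, and the distance is
\begin{equation*}
\delta := \min\bigl(|j+t|h,\ \pi - |j+t|h\bigr) \in [0,\pi/2].
\end{equation*}
Consequently $|\sin((j+t)h)| = \sin\delta$.

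Since $\alpha h < h/2 = \pi/(2N+1) \leq \pi/3$, both $\delta$ and $(\alpha - |t|)h$ lie in $[0,\pi/2)$, where $\sin$ is strictly increasing. The claim is therefore equivalent to $\delta > (\alpha - |t|)h$, which unfolds into the two inequalities
\begin{equation*}
|j+t| > \alpha - |t| \quad\text{and}\quad \tfrac{2N+1}{2} - |j+t| > \alpha - |t|.
\end{equation*}

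To finish, I would verify each by a short case analysis on the sign of $t$, using only $1 \leq j \leq N$ and $\alpha < 1/2$. The first is immediate from $j \geq 1 > \alpha$ (writing $|j+t|$ as $j+t$ or $j-|t|$ accordingly). The second follows because $\tfrac{2N+1}{2} - |j+t|$ is bounded below by $\tfrac{1}{2}-t$ when $t \geq 0$ and by $\tfrac{1}{2}+|t|$ when $t < 0$, both of which strictly exceed $\alpha - |t|$ once $\alpha < 1/2$. There is no substantive obstacle here --- the argument is pure case-work --- but the chief pitfall is remembering that for $|j|$ close to $N$ the quantity $(j+t)h$ can be near $\pm\pi$, so one must use $\pi - |j+t|h$ rather than $|j+t|h$ as the relevant distance; overlooking this gives a bound that fails at the edges of the grid.
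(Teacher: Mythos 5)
Your argument is correct and is essentially the paper's own proof in different clothing: the paper's single chain of inequalities likewise establishes $(\alpha-|t|)h < |j+t|\hairspace h < \pi - (\alpha-|t|)h$ and then invokes the monotonicity and symmetry of $\sin$ on $[0,\pi]$, which is exactly your ``distance to $\pi\mathbb{Z}$'' reduction. Your closing caution about $|j|$ near $N$ is precisely why the paper's chain ends with $\pi - (\alpha-|t|)h$ rather than $\pi$.
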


\begin{proof}
This is a consequence of the following chain of inequalities:
\begin{multline*}
0 \leq (\alpha - |t|)h < (1 - |t|)h \leq (|j| - |t|)h \\
\leq (|j| + |t|)h \leq (N + |t|)h < \left(N + |t| + \frac{1}{2} - \alpha\right)h = \pi - (\alpha - |t|)h \leq \pi.
\end{multline*}
\end{proof}

\medskip

\begin{lemma}\label{LEM:xjsBounds}
For $-N \leq j \leq N$, $j \neq 0$, and $0 < \alpha < 1/2$,
\[
(j - \alpha)h < x_j^* < (j + \alpha)h.
\]
\end{lemma}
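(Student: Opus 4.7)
My plan is to combine Lemma~\ref{LEM:xjs} with an intermediate value theorem argument on the closed interval $[(j-\alpha)h, (j+\alpha)h]$. The idea is that the solutions of \eqref{EQN:TakeoverEqn} in $[-\pi, \pi]$ are exactly $\tilde{x}_0$ and $x_j^*$, so if I can produce a solution strictly inside $\bigl((j-\alpha)h, (j+\alpha)h\bigr)$ and rule out that this solution is $\tilde{x}_0$, then it must be $x_j^*$, which yields the desired bounds directly.

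Concretely, I would introduce the continuous function
\[
\phi(x) = \left|\frac{\sin\bigl((x-(j-\alpha)h)/2\bigr)}{\sin\bigl((\tilde{x}_0-(j-\alpha)h)/2\bigr)}\right| - \left|\frac{\sin\bigl((x-(j+\alpha)h)/2\bigr)}{\sin\bigl((\tilde{x}_0-(j+\alpha)h)/2\bigr)}\right|,
\]
and evaluate it at the two endpoints. At $x = (j-\alpha)h$ the first numerator vanishes while the second equals $\sin(-\alpha h) \neq 0$, so $\phi\bigl((j-\alpha)h\bigr) < 0$; by the symmetric computation, $\phi\bigl((j+\alpha)h\bigr) > 0$. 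The intermediate value theorem then produces a zero $x^\star$ of $\phi$ in the open interval $\bigl((j-\alpha)h, (j+\alpha)h\bigr)$, i.e., a solution of \eqref{EQN:TakeoverEqn}. By Lemma~\ref{LEM:xjs} this zero is either $\tilde{x}_0$ or $x_j^*$; but $|\tilde{x}_0| \leq \alpha h < (1-\alpha)h \leq (|j|-\alpha)h$ (using $|j| \geq 1$ and $\alpha < 1/2$), so $\tilde{x}_0$ sits strictly outside $\bigl((j-\alpha)h, (j+\alpha)h\bigr)$. Hence $x^\star = x_j^*$ and the claim follows.

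The main obstacle I anticipate is not the IVT step itself but the technical bookkeeping required to certify that $\phi$ is genuinely well-defined and continuous on the endpoint interval. I need the two (constant) denominators to be nonzero, which amounts to checking that the arguments $(\tilde{x}_0 - (j \pm \alpha)h)/2$ lie in $(-\pi, 0) \cup (0, \pi)$ rather than hitting $0$ or $\pm\pi$. This is precisely the kind of estimate packaged by Lemma~\ref{LEM:sinjhVSsinah}, applied with $t = \pm\alpha$ shifted by the perturbation of $\tilde{x}_0$, and I would use it exactly here. If the authors instead prefer to work directly from the arctan formula in Lemma~\ref{LEM:xjs}, the same estimate is what one needs to control the signs of the numerator and denominator inside the $\arctan$, so Lemma~\ref{LEM:sinjhVSsinah} plays the analogous role in that route as well.
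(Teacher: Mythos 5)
Your argument is correct, and it takes a genuinely different route from the paper's. You exploit the uniqueness clause of Lemma~\ref{LEM:xjs} together with a sign change of the difference $\phi$ at the endpoints $(j\pm\alpha)h$: the IVT root lies strictly inside the interval, the interval sits inside $[-\pi,\pi]$ (since $(|j|+\alpha)h<(N+1/2)h=\pi$), and the root cannot be $\tilde{x}_0$ because $|\tilde{x}_0|\leq\alpha h<(|j|-\alpha)h$, so it must be $x_j^*$. The endpoint signs and the nonvanishing of the two constant denominators are elementary (the arguments $(\tilde{x}_0-(j\pm\alpha)h)/2$ have absolute value between $(1-2\alpha)h/2>0$ and $(N+2\alpha)h/2<\pi$), so the proof is complete; your appeal to Lemma~\ref{LEM:sinjhVSsinah} is slightly off target in its literal form (that lemma concerns $\sin\bigl((j+t)h\bigr)$, not half-angles), but the fact you actually need is immediate. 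The paper instead proceeds by a direct computation: it writes $\tan(x_j^*/2)=f\bigl(\tan(\tilde{x}_0/2)\bigr)$ for a decreasing M\"obius-type function $f$, locates the singularity of $f$ outside $[\tan(-\alpha h/2),\tan(\alpha h/2)]$, and then reduces the desired endpoint comparisons through several algebraic stages to the inequality $\sin(jh)\bigl(\tan(\alpha h)+\tan(jh/2)\bigr)>0$. Your approach is substantially shorter and avoids all of that trigonometric bookkeeping, at the cost of leaning entirely on the ``exactly two solutions'' claim of Lemma~\ref{LEM:xjs}; the paper's computation is self-contained at the level of explicit formulas and also records the monotone dependence of $x_j^*$ on $\tilde{x}_0$, which your argument does not provide (though the lemma does not require it).
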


\begin{proof}
Let
\[
f(t) = \frac{\cos(jh) - \cos(\alpha h) + t\sin(jh)}{t\bigl(\cos(jh) + \cos(\alpha h)\bigr) - \sin(jh)}.
\]
Note that $f\bigl(\tan(\tilde{x}_0/2)\bigr) = \tan(x_j^*/2)$.  A
straightforward computation using the quotient rule and some trigonometric
identities shows that
\[
f'(t) = - \left(\frac{\sin(\alpha h)}{t\bigl(\cos(jh) + \cos(\alpha h)\bigr) - \sin(jh)}\right)^2,
\]
which is always negative wherever it is defined.  By
Lemma~\ref{LEM:sinjhVSsinah}, we have $|\sin(jh)| > \sin(\alpha h)$ for
each $j$.  Furthermore, note that $j \neq 0$ implies $N \geq 1$, so that
$\alpha h < \pi/3$, and so $\cos(\alpha h) > 0$.  Therefore, $|\cos(jh) +
\cos(\alpha h)| \leq 1 + \cos(\alpha h)$ for each $j$.  It follows that
\[
\left|\frac{\sin(jh)}{\cos(jh) + \cos(\alpha h)}\right| > \frac{\sin(\alpha h)}{1 + \cos(\alpha h)} = \tan\left(\frac{\alpha h}{2}\right).
\]
Hence, the singularity in $f$ is outside the interval $[\tan(-\alpha h/2),
\tan(\alpha h/2)]$, and we conclude that
\[
f\bigl(\tan(\alpha h/2)\bigr) \leq f\bigl(\tan(\tilde{x}_0/2)\bigr) \leq f\bigl(\tan(-\alpha h/2)\bigr).
\]

Next, consider the function $g_+$ and the number $M_+$ defined by
\begin{align*}
g_+(t) &= \frac{\cos(jh) - t + \tan(-\alpha h/2)\sin(jh)}{\tan(-\alpha h/2)\bigl(\cos(jh) + t\bigr) - \sin(jh)}, \\
M_+ &= \frac{\cos(jh) - 1 + \tan(-\alpha h/2)\sin(jh)}{\tan(-\alpha h/2)\bigl(\cos(jh) - 1\bigr) - \sin(jh)}.
\end{align*}
Note that $g_+\bigl(\cos(\alpha h)\bigr) = f\bigl(\tan(-\alpha h/2)\bigr)$ and
that
\[
M_+ = \frac{\frac{\cos(jh) - 1}{\sin(jh)} + \tan(-\alpha h/2)}{\tan(-\alpha h/2)\frac{\cos(jh) - 1}{\sin(jh)} - 1} = \frac{\tan(jh/2) - \tan(-\alpha h/2)}{1 + \tan(jh/2)\tan(-\alpha h/2)} = \tan\left(\frac{(j + \alpha)h}{2}\right).
\]
Therefore, if we can show that $g_+\bigl(\cos(\alpha h)\bigr) < M_+$, we will
have that $\tan(x_j^*/2) < \tan\bigl((j + \alpha)h/2)$, which implies that
$x_j^* < (j + \alpha)h$, as desired.  The remainder of the proof will be
devoted to establishing this fact.  The lower bound on $x_j^*$ can be derived
by considering the function $g_-$ and the number $M_-$ defined by
\begin{align*}
g_-(t) &= \frac{\cos(jh) - t + \tan(\alpha h/2)\sin(jh)}{\tan(\alpha h/2)\bigl(\cos(jh) + t\bigr) - \sin(jh)}, \\
M_- &= \frac{\cos(jh) - 1 + \tan(\alpha h/2)\sin(jh)}{\tan(\alpha h/2)\bigl(\cos(jh) - 1\bigr) - \sin(jh)}
\end{align*}
and arguing similarly.  We omit the details.

To show that $g_+\bigl(\cos(\alpha h)\bigr) < M_+$, we begin by noting that by
multiplying the numerator and denominator of both $g_+(t)$ and $M_+$ by
$\cos(-\alpha h/2)$ and applying some trigonometric identities, they can be
rewritten as
\[
g_+(t) = \frac{\cos(\alpha h/2) t - \cos\bigl((j + \alpha/2)h\bigr)}{\sin(\alpha h/2)t + \sin\bigl((j + \alpha/2)h\bigr)}, \qquad M_+ = -\frac{\cos(\alpha h/2) - \cos\bigl((j + \alpha/2)h\bigr)}{\sin(\alpha h/2) - \sin\bigl((j + \alpha/2)h\bigr)}.
\]
Consider the affine function $\varphi$ obtained by multiplying together the
denominators in these new expressions for $g_+$ and $M_+$, where that of the
latter is taken to include the leading minus sign:
\begin{multline*}
\varphi(t) = -\sin(\alpha h/2)\Bigl(\sin(\alpha h/2) - \sin\bigl((j + \alpha/2)h\bigr)\Bigr)t \\
- \sin\bigl((j + \alpha/2)h\bigr)\Bigl(\sin(\alpha h/2) - \sin\bigl((j + \alpha/2)h\bigr)\Bigr).
\end{multline*}
We will show that $\varphi\bigl(\cos(\alpha h)\bigr) > 0$.  First, note that
$\varphi(t) = 0$ at $t = t_0 = -\sin\bigl((j + \alpha/2)h\bigr)/\sin(\alpha
h/2)$ and that by Lemma~\ref{LEM:sinjhVSsinah}, this point lies outside
of the interval $[-1, 1]$.  Next, observe that $\sin(\alpha h/2) > 0$, that
$\sin\bigl((j + \alpha/2)h\bigr)$ has the same sign as $j$, and that
\[
\varphi'(t) = -\sin(\alpha h/2)\Bigl(\sin(\alpha h/2) - \sin\bigl((j + \alpha/2)h\bigr)\Bigr).
\]
If $j < 0$, then $\sin(\alpha h/2) - \sin\bigl((j + \alpha/2)h\bigr) > 0$
trivially, so $\varphi'(t) < 0$.  Thus, $\varphi(t) > 0$ for $t < t_0$.
Inspecting the formula for $t_0$, we find that $t_0 > 0$ in this case.  Since
$t_0$ cannot lie in the interval $[-1, 1]$, it must further be true that $t_0 >
1$.  As $\cos(\alpha h) \leq 1$, we have that $\cos(\alpha h) < t_0$, as
desired.  On the other hand, if $j > 0$, then $\sin(\alpha h/2) - \sin\bigl((j
+ \alpha/2)h\bigr) < 0$ by Lemma~\ref{LEM:sinjhVSsinah}, and we have that
$\varphi'(t) > 0$, so that $\varphi(t) > 0$ for $t > t_0$.  But $t_0 < 0$ in
this case, and since $\cos(\alpha h) > 0$, we have $\cos(\alpha h) > t_0$, and
we are done.

It follows that $g_+\bigl(\cos(\alpha h)\bigr) < M_+$ is equivalent to the
inequality
\begin{multline*}
-\Bigl(\cos(\alpha h/2)\cos(\alpha h) - \cos\bigl((j + \alpha/2)h\bigr)\Bigr) \Bigl(\sin(\alpha h/2) - \sin\bigl((j + \alpha/2)h\bigr)\Bigr) \\
< \Bigl(\sin(\alpha h/2)\cos(\alpha h) + \sin\bigl((j + \alpha/2)h\bigr)\Bigr) \Bigl(\cos(\alpha h/2) - \cos\bigl((j + \alpha/2)h\bigr)\Bigr).
\end{multline*}
Expanding out the products, moving all terms involving $\cos(\alpha h)$ to the
left and those not involving it to the right, and using some trigonometric
identities to simplify the result, we find that this in turn is equivalent to
\[
\Bigl(\sin\bigl((j + \alpha)h\bigr) - \sin(\alpha h)\Bigr)\cos(\alpha h) < \sin(jh).
\]
Next, we expand $\sin\bigl((j + \alpha)h\bigr)$ and move all terms involving
$\sin(jh)$ to the right, leaving us with
\[
\bigl(\cos(jh) - 1\bigr)\sin(\alpha h)\cos(\alpha h) < \sin(jh)\bigl(1 - \bigl(\cos(\alpha h)\bigr)^2\bigr).
\]
Using the identities $1 - \cos(jh) = \sin(jh)\tan(jh/2)$ and $1 -
\bigl(\cos(\alpha h)\bigr)^2 = \sin\bigl(\alpha h\bigr)^2$, we can rearrange
this one more time to find that our original inequality is equivalent to
\[
\sin(jh)\bigl(\tan(\alpha h) + \tan(jh/2)\bigr) > 0.
\]
If $j > 0$, then since $\sin(jh) > 0$, this is equivalent to $-\tan(jh/2) <
\tan(\alpha h)$, which holds trivially, since the left-hand side is negative,
while the right-hand side is positive.  If $j < 0$, then $\sin(jh) < 0$, and
the inequality is equivalent to $-\tan(jh/2) > \tan(\alpha h)$.  Taking
inverse tangents, we see that this is equivalent to $\alpha < -j/2$, and
this inequality holds, since $-j \geq 1$ and $\alpha < 1/2$.  This completes
the proof.
\end{proof}

\medskip

Assembling these results, we can prove the following statement about the
right-hand side of \eqref{EQN:l0FactorBound}.

\medskip

\begin{lemma}\label{LEM:l0FactorBoundMaxEval}
We have
\[
\max\left(\left|\frac{\sin\left(\frac{x - (j - \alpha)h}{2}\right)}{\sin\left(\frac{\tilde{x}_0 - (j - \alpha)h}{2}\right)}\right|, \left|\frac{\sin\left(\frac{x - (j + \alpha)h}{2}\right)}{\sin\left(\frac{\tilde{x}_0 - (j + \alpha)h}{2}\right)}\right|\right) = \left|\frac{\sin\left(\frac{x - (j - \alpha)h}{2}\right)}{\sin\left(\frac{\tilde{x}_0 - (j - \alpha)h}{2}\right)}\right|
\]
when $1 \leq j \leq N$ and $x \in [-\pi, \tilde{x}_0] \cup [x_j^*, \pi]$ or
when $-N \leq j \leq -1$ and $x \in [x_j^*, \tilde{x}_0]$, and
\[
\max\left(\left|\frac{\sin\left(\frac{x - (j - \alpha)h}{2}\right)}{\sin\left(\frac{\tilde{x}_0 - (j - \alpha)h}{2}\right)}\right|, \left|\frac{\sin\left(\frac{x - (j + \alpha)h}{2}\right)}{\sin\left(\frac{\tilde{x}_0 - (j + \alpha)h}{2}\right)}\right|\right) = \left|\frac{\sin\left(\frac{x - (j + \alpha)h}{2}\right)}{\sin\left(\frac{\tilde{x}_0 - (j + \alpha)h}{2}\right)}\right|
\]
when $1 \leq j \leq N$ and $x \in [\tilde{x}_0, x_j^*]$ or when $-N \leq j \leq
-1$ and $x \in [-\pi, x_j^*] \cup [\tilde{x}_0, \pi]$.
\end{lemma}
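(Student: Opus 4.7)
The plan would be to combine Lemmas~\ref{LEM:xjs} and~\ref{LEM:xjsBounds}: use the first to split $[-\pi,\pi]$ into pieces on which the difference of the two expressions has constant sign, and then fix that sign on each piece by evaluating at a point that kills one of the two expressions.

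Writing $L(x)$ and $R(x)$ for the left-hand and right-hand arguments of the $\max$, my first observation would be that
\[
\sin\bigl((\pi - y)/2\bigr) = \cos(y/2), \qquad \sin\bigl((-\pi - y)/2\bigr) = -\cos(y/2),
\]
so $L(-\pi) = L(\pi)$ and $R(-\pi) = R(\pi)$, and hence both $L$ and $R$ descend to continuous functions on the circle $\mathbb{R}/2\pi\mathbb{Z}$.  Lemma~\ref{LEM:xjs} then tells us that $L = R$ at exactly two points of this circle, namely $\tilde{x}_0$ and $x_j^*$, so $L - R$ has constant sign on each of the two complementary open arcs.

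For $1 \leq j \leq N$, I would use the chain
\[
\tilde{x}_0 \leq \alpha h < (1-\alpha)h \leq (j-\alpha)h < x_j^* < (j+\alpha)h \leq (N+\alpha)h = \pi - \bigl(1/2 - \alpha\bigr)h < \pi
\]
(in which the two middle strict inequalities are Lemma~\ref{LEM:xjsBounds} and the rest follow from $h = 2\pi/(2N+1)$ together with $\alpha < 1/2$) to position the two test points.  The point $(j-\alpha)h$ lies in the short arc $(\tilde{x}_0, x_j^*)$ and kills $L$, yielding $R > L$ throughout that arc, while $(j+\alpha)h$ lies in the long arc $(x_j^*, \pi] \cup [-\pi, \tilde{x}_0)$ and kills $R$, yielding $L > R$ throughout that arc.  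These two statements are exactly what the lemma claims when $1 \leq j \leq N$.  The case $-N \leq j \leq -1$ would run identically, except that now $x_j^* < \tilde{x}_0$ and the test points $(j+\alpha)h$ and $(j-\alpha)h$ land respectively in the short arc $(x_j^*, \tilde{x}_0)$ (killing $R$, so $L > R$) and the long arc $[-\pi, x_j^*) \cup (\tilde{x}_0, \pi]$ (killing $L$, so $R > L$).

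I do not expect any serious obstacle.  The one observation worth flagging at the outset is the reduction to the circle: without it, the set $[-\pi, \tilde{x}_0) \cup (x_j^*, \pi]$ for $1 \leq j \leq N$ would have to be treated as two separate pieces, and the sign of $L - R$ would have to be determined on each by a direct (and somewhat more awkward) evaluation at $x = \pm \pi$.
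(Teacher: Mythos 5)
Your proposal is correct and is essentially the paper's own proof: both arguments locate the two equality points via Lemmas~\ref{LEM:xjs} and~\ref{LEM:xjsBounds}, then fix the sign of $L-R$ on each complementary arc by evaluating at $(j-\alpha)h$ and $(j+\alpha)h$, where one of the two expressions vanishes, using periodicity to treat $[-\pi,\tilde{x}_0]\cup[x_j^*,\pi]$ as a single arc. The only addition in the paper is the one-sentence remark that the case $\alpha=0$ is trivial (the two arguments of the maximum then coincide), which is needed since Lemma~\ref{LEM:xjs} and your ``kills one expression but not the other'' step both assume $\alpha>0$.
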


\medskip

\begin{proof}
We will give the proof assuming $1 \leq j \leq N$; the proof for $-N \leq j
\leq -1$ is similar.  When $\alpha = 0$, there is nothing to prove, so we may
assume $\alpha > 0$.  By Lemma~\ref{LEM:xjs}, the two arguments of the
maximum function are equal only at $x = \tilde{x}_0$ and $x = x_j^*$, and by
Lemma~\ref{LEM:xjsBounds}, we have $-\pi < \tilde{x}_0 < (j - \alpha)h <
x_j^* < (j + \alpha)h < \pi$.  Evaluating both arguments of the maximum
function at $x = (j - \alpha)h$, we see that the first is zero, while the
second is nonzero.  Thus, the second must be the larger on $[\tilde{x}_0,
x_j^*]$.  Evaluating at $x = (j + \alpha)h$, the situation is reversed, and by
periodicity we find that the first must be the larger on $[-\pi, \tilde{x}_0]
\cup [x_j^*, \pi]$.
\end{proof}

\medskip

This lemma is all we need for maximizing the factors in $|\tilde{\ell}_0(x)|$
with respect to the $\tilde{x}_j$ for $j \neq 0$.  We would like to do
something similar for $\tilde{x}_0$.  Unfortunately, the dependence on
$\tilde{x}_0$ of the various cases in this result tells us that we cannot go
further and maximize any one factor over $\tilde{x}_0$ independently of $x$.
The next result shows that we can get around this by pairing up the factors at
$\pm j$ for $1 \leq j \leq N$ instead of considering them in isolation.

Note that we state the result only for $x \in [-\pi, 0]$.  The reason is that,
by symmetry, any bound we obtain on $|\tilde{\ell}_0(x)|$ for $x \in [-\pi, 0]$
that is independent of $x$ must also hold for $x \in [0, \pi]$.  We will
therefore ignore the case of $x \in [0, \pi]$ until we reach the end of our
argument, at which point we will see that it has been taken care of for free.
Alternatively, one could write out an analogous argument that assumes $x \in
[0, \pi]$ instead.

\medskip

\begin{lemma}\label{LEM:MaxOverx0t}
For $x \in [-\pi, 0]$ and $1 \leq j \leq N$,
\[
\left|\frac{\sin\left(\frac{x - \tilde{x}_{-j}}{2}\right)\sin\left(\frac{x - \tilde{x}_j}{2}\right)}{\sin\left(\frac{\tilde{x}_0 - \tilde{x}_{-j}}{2}\right)\sin\left(\frac{\tilde{x}_0 - \tilde{x}_j}{2}\right)}\right| \leq \begin{cases}
\displaystyle\left|\frac{\sin\left(\frac{x + (j - \alpha)h}{2}\right)\sin\left(\frac{x - (j - \alpha)h}{2}\right)}{\sin\left(\frac{jh}{2}\right)\sin\left(\frac{(2\alpha - j)h}{2}\right)}\right| & -\pi \leq x \leq x_{-j}^* \\[2 em]
\displaystyle\left|\frac{\sin\left(\frac{x + (j + \alpha)h}{2}\right)\sin\left(\frac{x - (j - \alpha)h}{2}\right)}{\sin\left(\frac{(2\alpha + j)h}{2}\right)\sin\left(\frac{(2\alpha - j)h}{2}\right)}\right| & x_{-j}^* \leq x \leq 0.
\end{cases}
\]
\end{lemma}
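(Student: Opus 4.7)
The plan is to reduce each factor individually to its endpoint maximizer using the earlier lemmas, and then to maximize jointly over $\tilde{x}_0 \in [-\alpha h, \alpha h]$. First I would apply the first unnumbered lemma separately to the $\tilde{x}_j$- and $\tilde{x}_{-j}$-factors, bounding each by the larger of its two endpoint versions, and then use Lemma~\ref{LEM:l0FactorBoundMaxEval} to identify which endpoint wins in terms of the position of $x$ relative to $\tilde{x}_0$, $x_j^*$, and $x_{-j}^*$. Because Lemma~\ref{LEM:xjsBounds} gives $x_{-j}^* < -\alpha h \leq \tilde{x}_0 \leq \alpha h < x_j^*$, the only ambiguity left concerns the sign of $\tilde{x}_0 - x$, and this can be negative only if $x > -\alpha h$.

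In Case~A ($-\pi \leq x \leq x_{-j}^*$) we have $x < \tilde{x}_0$ automatically, so the maximizing endpoints are $\tilde{x}_j = (j-\alpha)h$ and $\tilde{x}_{-j} = -(j-\alpha)h$. The numerator then does not depend on $\tilde{x}_0$, and a product-to-sum identity rewrites the denominator as $\tfrac{1}{2}\bigl(\cos(\tilde{x}_0) - \cos((j-\alpha)h)\bigr)$, a quantity minimized on $\tilde{x}_0 \in [-\alpha h, \alpha h]$ at $\tilde{x}_0 = \pm \alpha h$ and equal there to $|\sin(jh/2)\sin((2\alpha - j)h/2)|$, matching the claimed right-hand side.

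In Case~B ($x_{-j}^* \leq x \leq 0$) the endpoint choice changes with the sign of $\tilde{x}_0 - x$, so I would split into two subcases. When $\tilde{x}_0 \geq x$, the maximizing endpoints are $\tilde{x}_j = (j-\alpha)h$ and $\tilde{x}_{-j} = -(j+\alpha)h$; another product-to-sum expansion gives denominator $\tfrac{1}{2}\bigl(\cos(\tilde{x}_0 + \alpha h) - \cos(jh)\bigr)$, minimized at $\tilde{x}_0 = \alpha h$ and yielding exactly the claimed bound. When $\tilde{x}_0 < x$, the endpoints flip to $\tilde{x}_j = (j+\alpha)h$ and $\tilde{x}_{-j} = -(j-\alpha)h$, and the analogous calculation produces the same optimal denominator (now attained at $\tilde{x}_0 = -\alpha h$) together with numerator $\tfrac{1}{2}\bigl(\cos(x - \alpha h) - \cos(jh)\bigr)$. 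The main obstacle is the last step: one must verify that this subcase is dominated by the first, i.e.\ that $\cos(x - \alpha h) \leq \cos(x + \alpha h)$. Since this second subcase arises only for $x \in (-\alpha h, 0]$, we have $|x + \alpha h| \leq \alpha h \leq |x - \alpha h|$ with both arguments in $[0, \pi]$, and the monotonicity of $\cos$ on this interval finishes the argument.
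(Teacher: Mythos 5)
Your proposal is correct and follows essentially the same route as the paper: reduce each factor of the $\pm j$ pair to its maximizing endpoint via Lemma~\ref{LEM:l0FactorBoundMaxEval}, rewrite the resulting products as cosine differences, minimize the denominator over $\tilde{x}_0 \in [-\alpha h, \alpha h]$, and resolve the remaining ambiguity on $x_{-j}^* \leq x \leq 0$ by comparing numerators through $\cos(x - \alpha h) \leq \cos(x + \alpha h)$. The only difference is organizational — you split the second regime by the sign of $\tilde{x}_0 - x$ where the paper splits at $x = -\alpha h$ — and the key estimates are identical.
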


\begin{proof}
Fix $x$, and define the functions $f_1$, $f_2$, and $f_3$ by
\begin{align*}
f_1(t) &= \frac{\sin\left(\frac{x + (j - \alpha)h}{2}\right)\sin\left(\frac{x - (j - \alpha)h}{2}\right)}{\sin\left(\frac{t + (j - \alpha)h}{2}\right)\sin\left(\frac{t - (j - \alpha)h}{2}\right)} = \frac{\cos\bigl((j - \alpha)h\bigr) - \cos(x)}{\cos\bigl((j - \alpha)h\bigr) - \cos(t)} \\
f_2(t) &= \frac{\sin\left(\frac{x + (j + \alpha)h}{2}\right)\sin\left(\frac{x - (j - \alpha)h}{2}\right)}{\sin\left(\frac{t + (j + \alpha)h}{2}\right)\sin\left(\frac{t - (j - \alpha)h}{2}\right)} = \frac{\cos(jh) - \cos(x + \alpha h)}{\cos(jh) - \cos(t + \alpha h)} \\
f_3(t) &= \frac{\sin\left(\frac{x + (j - \alpha)h}{2}\right)\sin\left(\frac{x - (j + \alpha)h}{2}\right)}{\sin\left(\frac{t + (j - \alpha)h}{2}\right)\sin\left(\frac{t - (j + \alpha)h}{2}\right)} = \frac{\cos(jh) - \cos(x - \alpha h)}{\cos(jh) - \cos(t - \alpha h)}.
\end{align*}
Note that only the denominators of these functions vary with $t$; the
numerators are constant.  By Lemma~\ref{LEM:l0FactorBoundMaxEval}, we
have
\begin{equation}\label{EQN:OnePairUnmaxBound}
\left|\frac{\sin\left(\frac{x - \tilde{x}_{-j}}{2}\right)\sin\left(\frac{x - \tilde{x}_j}{2}\right)}{\sin\left(\frac{\tilde{x}_0 - \tilde{x}_{-j}}{2}\right)\sin\left(\frac{\tilde{x}_0 - \tilde{x}_j}{2}\right)}\right| \leq \begin{cases}
|f_1(\tilde{x}_0)| & -\pi \leq x \leq x_{-j}^* \\
|f_2(\tilde{x}_0)| & x_{-j}^* \leq x \leq \tilde{x}_0 \\
|f_3(\tilde{x}_0)| & \tilde{x}_0 \leq x \leq x_j^*.
\end{cases}
\end{equation}
Recalling that $\tilde{x}_0 \in [-\alpha h, \alpha h]$, by maximizing
$|f_1(t)|$, $|f_2(t)|$, and $|f_3(t)|$ over $t \in [-\alpha h, \alpha h]$ under
the appropriate conditions on $x$, we will show that this inequality may be
replaced by
\[
\left|\frac{\sin\left(\frac{x - \tilde{x}_{-j}}{2}\right)\sin\left(\frac{x - \tilde{x}_j}{2}\right)}{\sin\left(\frac{\tilde{x}_0 - \tilde{x}_{-j}}{2}\right)\sin\left(\frac{\tilde{x}_0 - \tilde{x}_j}{2}\right)}\right| \leq \begin{cases}
|f_1(\alpha h)| & -\pi \leq x \leq x_{-j}^* \\
|f_2(\alpha h)| & x_{-j}^* \leq x \leq 0,
\end{cases}
\]
and this is the inequality we are trying to establish.  We consider three
cases.

\emph{\textit{Case 1}:  $-\pi \leq x \leq x_{-j}^*$.}  In this case, the
right-hand side of \eqref{EQN:OnePairUnmaxBound} is governed by $f_1$.
The denominator of $f_1$ has a critical point in $[-\alpha h, \alpha h]$ at $t
= 0$, and it takes on identical values at the endpoints $\pm \alpha h$.  Since
\[
0 < \alpha h < (1 - \alpha)h \leq (j - \alpha)h \leq (N - \alpha)h < \left(N + \frac{1}{2}\right)h = \pi,
\]
we have $\cos\bigl((j - \alpha)h\bigr) \leq \cos(\alpha h) \leq 1$, and so
$\bigl|\cos\bigl((j - \alpha)h\bigr) - \cos(\alpha h)\bigr| \leq
\bigl|\cos\bigl((j - \alpha)h\bigr) - 1\bigr|$.  Thus, the denominator is
smallest in magnitude at $t = \pm \alpha h$.  Since the numerator of $f_1$ does
not vary with~$t$, we are done.

\emph{\textit{Case 2}:  $x_{-j}^* \leq x \leq -\alpha h$.}  Here, the
behavior of \eqref{EQN:OnePairUnmaxBound} is determined by $f_2$.  The
only critical point of the denominator $f_2$ in $[-\alpha h, \alpha h]$ is at
the left endpoint, where it takes the value $\cos(jh) - 1$.  At the right
endpoint, the denominator is $\cos(jh) - \cos(2\alpha h)$.  From
\[
0 < 2\alpha h < h \leq jh \leq Nh < \left(N + \frac{1}{2}\right)h = \pi,
\]
we see that $\cos(jh) \leq \cos(2\alpha h) \leq 1$, and so we have $|\cos(jh) -
\cos(\alpha h)| \leq |\cos(jh) - 1|$.  Thus, the denominator is smallest in
magnitude at $t = \alpha h$, and we are done, as in the previous case.

\emph{\textit{Case 3}:  $-\alpha h \leq x \leq 0$.}  In this case, for
$-\alpha h \leq \tilde{x}_0 \leq x$, the right-hand side of
\eqref{EQN:OnePairUnmaxBound} is governed by $f_3$, while for $x \leq
\tilde{x}_0 \leq \alpha h$, it is governed by $f_2$.  From the previous case,
we know that the maximum absolute value of $f_2(t)$ for $t \in [-\alpha h,
\alpha h]$ occurs at $t = \alpha h$, and a virtually identical argument shows
that the maximum absolute value of $f_3(t)$ over the same range occurs at $t =
-\alpha h$.  We are thus left to compare $|f_3(-\alpha h)|$ and $|f_2(\alpha
h)|$.  Since these two quantities have the same denominator, we need only
compare their numerators.  The conditions on $x$ imply that
\[
0 \leq \alpha h + x \leq \alpha h - x \leq 2\alpha h \leq jh < \pi,
\]
the later inequalities following as in the developments of the previous case.
Therefore, $\cos(jh) \leq \cos(x - \alpha h) \leq \cos(x + \alpha h)$, which
implies that $|\cos(jh) - \cos(x - \alpha h)| \leq |\cos(jh) - \cos(x + \alpha
h)|$. It follows that $|f_2(\alpha h)| \geq |f_3(-\alpha h)|$, as desired.
\end{proof}

\medskip

We can now prove the following result, which gives a bound on
$|\tilde{\ell}_0(x)|$ for $x \in [-\pi, 0]$ that is independent of the points
$\tilde{x}_j$.  First, we introduce some additional notation that we will need
for the remainder of our argument.  Define $x_0^* = 0$ and $x_{-N - 1}^* =
-\pi$.  For $0 \leq k \leq N$, let $R_k^* = [x_{-k - 1}^*, x_{-k}^*]$ and $R_k
= [(-k - 1 - \alpha)h, (-k + \alpha)h]$.  Observe that $\bigcup_{k = 0}^N R_k^*
= [-\pi, 0]$.  Again for $0 \leq k \leq N$, let
\[
P_k(x) = \prod_{j = 1}^N \left|\sin\left(\frac{x - (j - \alpha)h}{2}\right)\right|\ \times \prod_{j = 1}^k \left|\sin\left(\frac{x + (j - \alpha)h}{2}\right)\right| \times \prod_{j = k + 1}^N \left|\sin\left(\frac{x + (j + \alpha)h}{2}\right)\right|,
\]
and let
\[
Q_k = \prod_{j = 1}^N \left|\sin\left(\frac{(2\alpha - j)h}{2}\right)\right| \times \prod_{j = 1}^k \left|\sin\left(\frac{jh}{2}\right)\right| \times \prod_{j = k + 1}^N \left|\sin\left(\frac{(2\alpha + j)h}{2}\right)\right|.
\]
Define
\[
M_k = \max_{x \in [-\pi, 0] \cap R_k} \frac{P_k(x)}{Q_k},
\]
and note that $M_k$ does not depend on the points $\tilde{x}_j$.

\medskip

\begin{lemma}\label{LEM:l0CrazyProductBound}
For $0 \leq k \leq N$ and $x \in R_k^*$, we have $|\tilde{\ell}_0(x)| \leq
M_k$.
\end{lemma}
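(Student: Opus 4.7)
The plan is to combine Lemma~\ref{LEM:MaxOverx0t} over all $N$ pairs $(-j, j)$ for $1 \leq j \leq N$, and then observe that every $x \in R_k^*$ lies in $R_k$ so that the pointwise bound $P_k(x)/Q_k$ is in turn bounded by $M_k$.

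First, I would write
\[
|\tilde\ell_0(x)| = \prod_{j=1}^{N} \left|\frac{\sin\left(\frac{x - \tilde{x}_{-j}}{2}\right)\sin\left(\frac{x - \tilde{x}_j}{2}\right)}{\sin\left(\frac{\tilde{x}_0 - \tilde{x}_{-j}}{2}\right)\sin\left(\frac{\tilde{x}_0 - \tilde{x}_j}{2}\right)}\right|
\]
and then apply Lemma~\ref{LEM:MaxOverx0t} factor-by-factor. The key bookkeeping step is to determine, for each fixed $x \in R_k^*$, which of the two cases in Lemma~\ref{LEM:MaxOverx0t} governs each $j$. Since Lemma~\ref{LEM:xjsBounds} gives the ordering $0 > x_{-1}^* > x_{-2}^* > \cdots > x_{-N}^* > -\pi$, and since $x \in [x_{-k-1}^*, x_{-k}^*]$, it follows that $x \leq x_{-j}^*$ for $1 \leq j \leq k$ and $x \geq x_{-j}^*$ for $k+1 \leq j \leq N$. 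Thus the first case of Lemma~\ref{LEM:MaxOverx0t} applies to $j = 1, \ldots, k$ and the second case applies to $j = k+1, \ldots, N$.

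Multiplying the resulting bounds together, the common factor $|\sin((x - (j-\alpha)h)/2)|/|\sin((2\alpha - j)h/2)|$ appears for every $j$, yielding the product that appears in the numerator of $P_k(x)$ (first subproduct) and $Q_k$ (first subproduct). The case-dependent factors give the remaining two subproducts of $P_k(x)$ and $Q_k$ exactly as defined. Therefore $|\tilde\ell_0(x)| \leq P_k(x)/Q_k$ for all $x \in R_k^*$.

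Finally, to replace $P_k(x)/Q_k$ by $M_k$, I would verify that $R_k^* \subseteq R_k$. This follows directly from Lemma~\ref{LEM:xjsBounds}, which implies $x_{-k-1}^* > (-k-1-\alpha)h$ and $x_{-k}^* < (-k+\alpha)h$, together with the boundary conventions $x_0^* = 0 < \alpha h$ and $x_{-N-1}^* = -\pi = -(N+\tfrac{1}{2})h > -(N+1+\alpha)h$. Since $R_k^* \subseteq [-\pi, 0] \cap R_k$, the inequality $P_k(x)/Q_k \leq M_k$ holds on $R_k^*$ by definition of $M_k$. The only real obstacle is the indexing bookkeeping for which case of Lemma~\ref{LEM:MaxOverx0t} triggers at each $j$; once that is set up correctly the rest is essentially just assembling previously established inequalities.
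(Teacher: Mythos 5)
Your proposal is correct and is essentially the paper's own proof, which simply says to multiply the inequalities of Lemma~\ref{LEM:MaxOverx0t} over $1 \leq j \leq N$ and note that $R_k^* \subset R_k$ by Lemma~\ref{LEM:xjsBounds}. Your additional bookkeeping (the ordering $0 > x_{-1}^* > \cdots > x_{-N}^* > -\pi$, hence the first case governs $j \leq k$ and the second governs $j \geq k+1$, reproducing the three subproducts of $P_k$ and $Q_k$) is accurate and just makes explicit what the paper leaves implicit.
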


\medskip

\begin{proof}
Multiply together the inequalities derived in Lemma~\ref{LEM:MaxOverx0t}
for $1 \leq j \leq N$, and note that $R_k^* \subset R_k$ by
Lemma~\ref{LEM:xjsBounds}.
\end{proof}

\medskip

Next, we turn to bounding $M_k$.  Our strategy will be to reduce the products
$P_k(x)$ and $Q_k$ to sums by taking logarithms and then bounding the sums
using integrals.  We begin with $P_k(x)$, which requires more work than $Q_k$
because of its dependence on $x$.  The bound that we need is given by
Lemma~\ref{LEM:NumerBound}, but before presenting it, we first establish
several minor technical results that we will need in its proof.

\medskip
\begin{lemma}\label{LEM:bkckp1Bound}
For $0 \leq k \leq N$ and $x \in R_k$,
\[
\left|\sin\left(\frac{x + (k - \alpha)h}{2}\right)\sin\left(\frac{x + (k + 1 + \alpha)h}{2}\right)\right| \leq \left|\sin\left(\frac{(\alpha + 1/2)h}{2}\right)\right|^2.
\]
\end{lemma}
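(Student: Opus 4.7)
The plan is to reduce the product of sines on the left to a single expression involving one cosine by the product-to-sum identity, and then maximize over $x \in R_k$ by a direct monotonicity argument.

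First I would introduce the translation $u = x + (k + 1/2)h$, which sends the interval $R_k = [(-k-1-\alpha)h, (-k+\alpha)h]$ onto the symmetric interval $[-(1/2+\alpha)h, (1/2+\alpha)h]$ and transforms the arguments so that
\[
\frac{x + (k-\alpha)h}{2} = \frac{u - (1/2+\alpha)h}{2}, \qquad \frac{x + (k+1+\alpha)h}{2} = \frac{u + (1/2+\alpha)h}{2}.
\]
Thus the two sines appearing in the statement are $\sin\bigl((u \mp (1/2+\alpha)h)/2\bigr)$.

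Next I would apply the identity $\sin A \sin B = \tfrac{1}{2}(\cos(A-B) - \cos(A+B))$ with $A-B = -(1/2+\alpha)h$ and $A+B = u$ to obtain
\[
\sin\left(\frac{u - (1/2+\alpha)h}{2}\right)\sin\left(\frac{u + (1/2+\alpha)h}{2}\right) = \tfrac{1}{2}\bigl(\cos\bigl((1/2+\alpha)h\bigr) - \cos(u)\bigr).
\]
Since $\alpha < 1/2$ and (because $k \leq N$ implies $N \geq 1$, so $h \leq 2\pi/3$) we have $0 < (1/2+\alpha)h < \pi$, cosine is decreasing on $[0, (1/2+\alpha)h]$. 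Because $|u| \leq (1/2+\alpha)h$ on $R_k$, this gives $\cos(u) \geq \cos\bigl((1/2+\alpha)h\bigr)$, so the absolute value equals $\tfrac{1}{2}\bigl(\cos(u) - \cos\bigl((1/2+\alpha)h\bigr)\bigr)$.

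Finally, this expression is maximized over the relevant range of $u$ at $u = 0$, where it equals $\tfrac{1}{2}\bigl(1 - \cos\bigl((1/2+\alpha)h\bigr)\bigr)$. Invoking the half-angle identity $1 - \cos\theta = 2\sin^2(\theta/2)$ turns this into $\sin^2\bigl((1/2+\alpha)h/2\bigr)$, which is exactly the claimed bound. There is no genuine obstacle here; the only point requiring care is verifying that $(1/2+\alpha)h$ lies in $(0,\pi)$ so that the monotonicity step is legitimate, and this follows immediately from $\alpha < 1/2$ together with $N \geq 1$.
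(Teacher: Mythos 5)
Your proof is correct and takes essentially the same route as the paper: the paper differentiates the product to find its unique critical point at the midpoint $x = -(k+1/2)h$ of $R_k$ (where the product vanishes at the endpoints) and evaluates there, which is exactly the midpoint maximizer that your product-to-sum rewriting and the monotonicity of $\cos$ exhibit. The only quibble is that $0 \le k \le N$ does not by itself force $N \ge 1$ --- that comes from the standing assumption $K \ge 3$, which the paper's argument also needs here --- but this is immaterial.
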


\begin{proof}
The derivative of the expression inside the absolute value signs on the
left-hand side of this inequality is $(1/2)\sin\bigl(x + (k + 1/2)h\bigr)$,
which vanishes inside $R_k$ only at $x = -(k + 1/2)h$.  The maximum absolute
value of the expression must occur at this point, since it is zero at the
endpoints of $R_k$.  Substituting this value in for $x$ in the left-hand side,
we arrive at the right-hand side.
\end{proof}

\medskip

\begin{lemma}\label{LEM:a1pb1Bound}
For $1 \leq k \leq N$ and $x \in R_k$,
\[
\left|\sin\left(\frac{x - (1 - \alpha)h}{2}\right)\sin\left(\frac{x + (1 - \alpha)h}{2}\right)\right| \geq \left|\sin\left(\frac{(k + 1 - 2\alpha)h}{2}\right)\sin\left(\frac{(k - 1)h}{2}\right)\right|.
\]
\end{lemma}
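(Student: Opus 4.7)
The plan is to convert products of sines to differences of cosines using the product-to-sum identity $2\sin A \sin B = \cos(A-B) - \cos(A+B)$. With $A = (x - (1-\alpha)h)/2$ and $B = (x + (1-\alpha)h)/2$, the left-hand side becomes $\tfrac{1}{2}|\cos((1-\alpha)h) - \cos(x)|$. Applying the same identity with $A = (k+1-2\alpha)h/2$ and $B = (k-1)h/2$, noting $A - B = (1-\alpha)h$ and $A + B = (k-\alpha)h$, shows that the right-hand side equals $\tfrac{1}{2}|\cos((1-\alpha)h) - \cos((k-\alpha)h)|$. The lemma therefore reduces to establishing
\[
\bigl|\cos((1-\alpha)h) - \cos(x)\bigr| \;\geq\; \bigl|\cos((1-\alpha)h) - \cos((k-\alpha)h)\bigr|
\]
for $x \in R_k$.

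I would dispose of $k=1$ at once: the factor $\sin((k-1)h/2)$ on the right-hand side of the original inequality vanishes, so there is nothing to prove. For $2 \leq k \leq N$, since $(1-\alpha)h$ and $(k-\alpha)h$ both lie in $[0,\pi]$ with the latter strictly larger, monotonicity of cosine gives $\cos((k-\alpha)h) < \cos((1-\alpha)h)$. Using the evenness of cosine to substitute $y = -x$, the task becomes to show $\cos(y) \leq \cos((k-\alpha)h)$ for $y \in [(k-\alpha)h,(k+1+\alpha)h]$. Once this bound is in hand, it combines with the previous one to yield
\[
\cos((1-\alpha)h) - \cos(x) \;\geq\; \cos((1-\alpha)h) - \cos((k-\alpha)h) \;>\; 0,
\]
so the absolute value signs may be dropped and the reduced inequality follows immediately.

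The main obstacle is proving $\cos(y) \leq \cos((k-\alpha)h)$ on $[(k-\alpha)h,(k+1+\alpha)h]$ when this interval extends past $\pi$. If $(k+1+\alpha)h \leq \pi$, cosine is decreasing on this interval and the bound is immediate. Otherwise, $\alpha < 1/2$ and $k \leq N$ force $k = N$, in which case $(N-\alpha)h = \pi - (1/2+\alpha)h$ and $(N+1+\alpha)h = \pi + (1/2+\alpha)h$, so the interval is symmetric about $\pi$. Cosine on it is thus maximized at the endpoints, where it takes the common value $-\cos((1/2+\alpha)h) = \cos((N-\alpha)h)$, completing the argument.
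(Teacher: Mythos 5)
Your proposal is correct and follows essentially the same route as the paper's proof: both reduce the two sides via the product-to-sum identity to a comparison of $\cos(x)$ with $\cos\bigl((k-\alpha)h\bigr)$, handle $2 \leq k \leq N-1$ by monotonicity of cosine on $[0,\pi]$, and handle $k = N$ by observing that the relevant interval is symmetric about $\pi$ so the extremum occurs at the endpoints. The only cosmetic difference is that you dispatch $k=1$ separately by noting the right-hand side vanishes, whereas the paper folds it into the generic monotonicity case.
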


\begin{proof}
Let $f(x)$ be the expression inside the absolute value signs on the left-hand
side of this inequality.  Applying some trigonometric identities, we find that
$f(x) = \cos\bigl((1 - \alpha)h\bigr)/2 - \cos(x)/2$.  If $1 \leq k \leq N -
1$, then since
\[
0 \leq (1 - \alpha)h \leq (k - \alpha)h \leq -x \leq (k + 1 + \alpha)h \leq (N + \alpha)h < \pi,
\]
we have $\cos(x) \leq \cos\bigl((1 - \alpha)h\bigr)$, and so $f(x) \geq 0$ for
$x \in R_k$.  The same string of inequalities shows that $f'(x) = \sin(x)/2$ is
negative on $R_k$, so $f$ is decreasing on $R_k$.  Therefore, the smallest
absolute value of $f$ is obtained by evaluating at the right endpoint $x = (-k
+ \alpha)h$, and this produces the expression on the right-hand side of the
inequality to be established.

For the $k = N$ case, we note that $f$ has a critical point in $R_N$ at the
midpoint $x = -\pi$.  Since $f''(x) = \cos(x)/2$, we have $f''(-\pi) = -1/2$,
and so this point is a local maximum.  Thus, the minimum must occur at one of
the two endpoints.  Noting that $f$ is even about $\pi$, the value of $f$ must
be the same at both endpoints, so we may as well pick the right endpoint $x =
(-N + \alpha)h$.  Since $0 \leq (1 - \alpha)h \leq (N - \alpha)h \leq \pi$, the
value of $f$ at this endpoint is nonnegative, completing the proof.
\end{proof}

\medskip

\begin{lemma}\label{LEM:LooseTermBoundkeq0}
For $K \geq 3$ and $x \in R_0$,
\[
\left|\sin\left(\frac{x - (1 - \alpha)h}{2}\right)\sin\left(\frac{x + (1 + \alpha)h}{2}\right)\right| \leq \left|\sin\left(\frac{h}{2}\right)\right|^2.
\]
\end{lemma}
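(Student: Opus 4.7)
The plan is to use a product-to-sum identity to collapse the left-hand side into a single cosine expression in $x$, and then maximize by inspection.

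Concretely, with $A = (x - (1 - \alpha)h)/2$ and $B = (x + (1 + \alpha)h)/2$, one checks that $A - B = -h$ and $A + B = x + \alpha h$, so
\[
\sin(A)\sin(B) = \tfrac{1}{2}\bigl[\cos(h) - \cos(x + \alpha h)\bigr].
\]
Thus the quantity to be bounded equals $\tfrac{1}{2}\bigl|\cos(x + \alpha h) - \cos(h)\bigr|$ on $R_0 = [(-1 - \alpha)h,\, \alpha h]$.

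Next I would check the sign. For $x \in R_0$ we have $x + \alpha h \in [-h,\, 2\alpha h]$, and since $\alpha < 1/2$ and $K \geq 3$ (so $h \leq 2\pi/3$), both endpoints of this interval lie in $[-h, h] \subset (-\pi, \pi)$. Because $\cos$ is even and decreasing on $[0, \pi]$, this gives $\cos(x + \alpha h) \geq \cos(h)$ throughout $R_0$, so the bracketed quantity is nonnegative and equals $\cos(x + \alpha h) - \cos(h)$.

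To maximize, note that $\cos(x + \alpha h)$ is largest on $R_0$ when $x + \alpha h = 0$, i.e., $x = -\alpha h$, which lies in the interior of $R_0$. Substituting,
\[
\tfrac{1}{2}\bigl[\cos(0) - \cos(h)\bigr] = \tfrac{1}{2}\bigl[1 - \cos(h)\bigr] = \sin^2(h/2),
\]
by the half-angle identity. This matches the right-hand side exactly, completing the proof. There is really no obstacle here; the only delicate point is invoking $K \geq 3$ to guarantee $h < \pi$, which is what allows the monotonicity-of-cosine argument to pin down the maximum.
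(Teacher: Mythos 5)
Your proof is correct and follows essentially the same route as the paper's: both convert the product to $\tfrac{1}{2}\bigl[\cos(h) - \cos(x + \alpha h)\bigr]$, verify the sign on $R_0$ via $x + \alpha h \in [-h, h] \subset (-\pi, \pi)$, and locate the maximum of the absolute value at $x = -\alpha h$, yielding $\sin^2(h/2)$. No issues.
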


\begin{proof}
As in the previous argument, let $f(x)$ be the expression inside the absolute
value signs on the left-hand side of the inequality, and note that $f(x) =
\cos(h)/2 - \cos(x + \alpha h)/2$.  Since
\[
-\pi < -h \leq x + \alpha h \leq 2\alpha h \leq h < \pi,
\]
for $x \in R_0$, we have $\cos(h) \leq \cos(x + \alpha h)$ for $x \in R_0$, and
it follows that $f$ is negative on $R_0$.  Since $\cos(x + \alpha h) \leq 1$,
we have $0 \geq f(x) \geq \cos(h)/2 - 1/2$.  This lower bound is attained for
$x \in R_0$ at $x = -\alpha h$.  Thus, $f$ attains its maximum absolute value
on $R_0$ at $x = -\alpha h$, and substituting this value into the original
expression for $f$ yields the claimed inequality.
\end{proof}

\medskip

\begin{lemma}\label{LEM:LooseTermsBoundkeq1}
For $K \geq 3$ and $x \in R_1$, the following inequalities hold:
\begin{align*}
\left|\sin\left(\frac{x - (1 - \alpha)h}{2}\right)\right| &\geq \left|\sin\bigl((1 - \alpha)h\bigr)\right|, \\
\left|\sin\left(\frac{x + (1 - \alpha)h}{2}\right)\right| &\leq \left|\sin\left(\frac{(1 + 2\alpha)h}{2}\right)\right|, \\
\left|\sin\left(\frac{x + (2 + \alpha)h}{2}\right)\right| &\leq \left|\sin\left(\frac{(1 + 2\alpha)h}{2}\right)\right|.
\end{align*}
\end{lemma}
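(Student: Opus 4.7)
\emph{Plan.} The three inequalities are in the same spirit as Lemmas~\ref{LEM:bkckp1Bound}--\ref{LEM:LooseTermBoundkeq0}: in each case the argument of the sine factor sweeps out an interval as $x$ ranges over $R_1 = [-(2 + \alpha)h,\, -(1 - \alpha)h]$, and we locate the relevant extremum either at an endpoint or by exploiting the monotonicity of $|\sin|$ on $[0, \pi/2]$. I would treat the three inequalities separately.

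For the first (lower) inequality, set $u(x) = (x - (1 - \alpha)h)/2$. As $x$ varies over $R_1$, $u(x)$ sweeps $[-3h/2,\, -(1 - \alpha)h]$, which is contained in $[-\pi, 0]$ because $h \leq 2\pi/3$ when $K \geq 3$. Hence $|\sin u(x)| = \sin(-u(x))$ with $-u(x) \in [(1 - \alpha)h,\, 3h/2] \subseteq [0, \pi]$. Since $\sin$ attains its minimum on any subinterval of $[0, \pi]$ at an endpoint, the minimum of $|\sin u(x)|$ over $R_1$ equals $\min\bigl(\sin((1 - \alpha)h),\, \sin(3h/2)\bigr)$. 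I would then compare the two candidates via the reflection $\sin \phi = \sin(\pi - \phi)$: the inequality $\sin((1 - \alpha)h) \leq \sin(3h/2)$ reduces to $(1 - \alpha)h + 3h/2 \leq \pi$, i.e.\ $(5/2 - \alpha)h \leq \pi$, which holds for the range of $h$ in play.

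For the remaining two inequalities, I would compute the ranges of $w(x) = (x + (1 - \alpha)h)/2$ and $v(x) = (x + (2 + \alpha)h)/2$ directly. As $x$ sweeps $R_1$, $w(x) \in [-(1 + 2\alpha)h/2,\, \alpha h/2]$ and $v(x) \in [0,\, (1 + 2\alpha)h/2]$; in particular $|w(x)|,\, v(x) \leq (1 + 2\alpha)h/2 \leq h \leq \pi/2$, so $|\sin|$ is a monotone increasing function of its absolute argument on both ranges. Both bounds then follow from a single application of that monotonicity, each evaluated at the endpoint where the absolute argument is maximal.

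\emph{Main obstacle.} The second and third inequalities are essentially free once the ranges are identified. The genuine delicate point is the first inequality: it is a lower bound, and $u(x)$ can come close to $-\pi$ (where $|\sin|$ vanishes) at the left endpoint of $R_1$ when $h$ is large. Verifying that $\sin((1 - \alpha)h)$, rather than $\sin(3h/2)$, is the smaller of the two endpoint values is the crux of the argument, and it is precisely the reflection-about-$\pi/2$ comparison above; the rest is routine trigonometry.
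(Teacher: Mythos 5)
Your approach is the same as the paper's: for each factor, compute the interval swept by the sine's argument as $x$ ranges over $R_1 = [-(2+\alpha)h,\,-(1-\alpha)h]$ and locate the relevant extremum at an endpoint. Your treatment of the first inequality is in fact more explicit than the paper's one-line justification, and you correctly identify the endpoint comparison $\sin\bigl((1-\alpha)h\bigr) \le \sin(3h/2)$ as the crux.

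There is, however, one step that fails as written: the claim that $(5/2-\alpha)h \le \pi$ "holds for the range of $h$ in play." That inequality is equivalent to $K \ge 5 - 2\alpha$, which forces $K \ge 5$; it is false for $K = 3$. And this is not a repairable gap in the argument but a defect of the statement itself: for $K=3$ one has $h = 2\pi/3$, so at the left endpoint $x = -(2+\alpha)h$ of $R_1$ the argument $\bigl(x-(1-\alpha)h\bigr)/2$ equals $-3h/2 = -\pi$ and the left-hand side of the first inequality vanishes while the right-hand side is positive. (The second and third inequalities likewise need $(1+2\alpha)h/2 \le \pi/2$, i.e.\ $K \ge 2+4\alpha$, which fails for $K=3$ when $\alpha > 1/4$; your assertion that $h \le \pi/2$ is also false at $K=3$.) The paper's own proof shares this blind spot — it silently assumes $|\sin|$ is monotone over the ranges it writes down — and the lemma is only ever invoked inside Lemma~\ref{LEM:NumerBound} under the hypothesis that $K$ is sufficiently large, so nothing downstream is affected. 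Two harmless slips worth fixing: the range of $w(x) = \bigl(x+(1-\alpha)h\bigr)/2$ is $[-(1+2\alpha)h/2,\,0]$, not $[-(1+2\alpha)h/2,\,\alpha h/2]$ (the maximal $|w|$ is unchanged, so your conclusion stands); and you should state the condition you actually need, $(1+2\alpha)h/2 \le \pi/2$, rather than the false intermediate bound $h \le \pi/2$.
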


\begin{proof}
The first inequality follows from
\[
-\pi \leq -\frac{3}{2}h \leq \frac{x - (1 - \alpha)h}{2} \leq (\alpha - 1)h \leq 0,
\]
the second from
\[
-\pi \leq -\frac{(1 + 2\alpha)h}{2} \leq \frac{x + (1 - \alpha)h}{2} \leq 0,
\]
and the third from
\[
0 \leq \frac{x + (2 + \alpha)h}{2} \leq \frac{(1 + 2\alpha)h}{2} \leq \pi.
\]
\end{proof}

\medskip

\begin{lemma}\label{LEM:SingIntBound}
For $0 \leq k \leq N$ and $x \in R_k$,
\begin{multline*}
[x + (k - \alpha)h]\log\left(-\frac{x + (k - \alpha)h}{2}\right) - [x + (k + 1 + \alpha)h]\log\left(\frac{x + (k + 1 + \alpha)h}{2}\right) \\
\leq -(1 + 2\alpha)h\log\left(\frac{(\alpha + 1/2)h}{2}\right).
\end{multline*}
\end{lemma}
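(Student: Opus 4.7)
The plan is to reduce the inequality to a statement about convexity of the function $t \mapsto t\log t$. For $x \in R_k = [(-k-1-\alpha)h, (-k+\alpha)h]$, the quantities $a = -[x + (k - \alpha)h]$ and $b = x + (k + 1 + \alpha)h$ are both nonnegative, and the key observation is that their sum $a + b = (1 + 2\alpha)h$ is \emph{constant}, independent of both $x$ and $k$. Multiplying by signs appropriately, the left-hand side of the claimed inequality is exactly $-\bigl[a \log(a/2) + b \log(b/2)\bigr]$, while the right-hand side is $-(a+b)\log\bigl((a+b)/4\bigr)$ since $(a+b)/4 = (1/2 + \alpha)h/2$. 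So the inequality to be established is
\[
a \log(a/2) + b \log(b/2) \geq (a+b) \log\left(\frac{a+b}{4}\right).
\]

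Setting $u = a/2$ and $v = b/2$ turns this into
\[
u \log u + v \log v \geq (u + v) \log\left(\frac{u+v}{2}\right),
\]
which is precisely Jensen's inequality applied to the two points $u,v$ for the function $\varphi(t) = t \log t$. Since $\varphi''(t) = 1/t > 0$ on $(0, \infty)$, $\varphi$ is convex, and therefore $\tfrac{1}{2}[\varphi(u) + \varphi(v)] \geq \varphi\bigl((u+v)/2\bigr)$; multiplying by two gives exactly the needed inequality.

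The only delicate point is the behavior at the endpoints $x = (-k + \alpha)h$ and $x = (-k - 1 - \alpha)h$, where $a$ or $b$ vanishes and one of the logarithms on the left-hand side is singular. Under the usual convention $0 \log 0 = 0$ (justified by continuity since $\lim_{t \to 0^+} t \log t = 0$), the Jensen inequality is still valid, and this convention is compatible with the left-hand side of the original inequality, which contains each logarithm multiplied by a factor that vanishes simultaneously with its argument. Thus no obstacle arises, and equality holds precisely at $u = v$, i.e.\ at the midpoint $x = -(k + 1/2)h$ of $R_k$, consistent with the form of the right-hand side. There is no substantive difficulty; the entire proof is the algebraic identification of the two sides followed by a one-line convexity argument.
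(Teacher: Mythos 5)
Your proof is correct, and it takes a genuinely different route from the paper's. The paper treats the left-hand side as a function $f(x)$, computes $f'(x) = \log\bigl(-\tfrac{x + (k - \alpha)h}{x + (k + 1 + \alpha)h}\bigr)$, locates the unique critical point at the midpoint $x = -(k + 1/2)h$, argues it is the maximum, and evaluates $f$ there. You instead observe that $a = -[x + (k-\alpha)h]$ and $b = x + (k+1+\alpha)h$ are nonnegative on $R_k$ with constant sum $(1+2\alpha)h$, and that after the substitution $u = a/2$, $v = b/2$ the inequality is exactly the two-point Jensen inequality for the convex function $t \mapsto t\log t$; I have checked the algebraic identification of both sides and it is exact, with equality at $a = b$, i.e.\ at the same midpoint the paper finds. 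What your approach buys: it avoids the derivative computation entirely, and it handles the endpoint degeneracy cleanly via the convention $0\log 0 = 0$ — a point the paper's proof actually fumbles, since its claim that $f(x) \to -\infty$ at the endpoints of $R_k$ is not literally true ($t\log(t/2) \to 0$ as $t \to 0^+$, so $f$ tends to finite values there); the paper's conclusion survives only because the sign of $f'$ still shows the interior critical point is the global maximum. What the paper's approach buys is structural uniformity: the same derivative-and-critical-point template is reused almost verbatim in Lemma~\ref{LEM:SingIntBoundkeq0}, whereas your Jensen argument would not transfer to that asymmetric variant without modification.
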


\begin{proof}
Let $f(x)$ be the expression on the left-hand side of this inequality.
The derivative of $f$ is
\[
f'(x) = \log\left(-\frac{x + (k - \alpha)h}{x + (k + 1 + \alpha)h}\right),
\]
and this vanishes in $R_k$ only at the point $x = -(k + 1/2)h$.  Since $f(x)$
tends to $-\infty$ as $x$ approaches the endpoints of $R_k$, $f$ must assume
its maximum value on $R_k$ at this point.  Evaluating $f$ at this point yields
the right-hand side of the claimed inequality.
\end{proof}

\medskip

\begin{lemma}\label{LEM:SingIntBoundkeq0}
For $x \in R_0$,
\begin{multline*}
\bigl(x - (1 - \alpha)h\bigr)\log\left(-\frac{x - (1 - \alpha)h}{2}\right) \\ - \bigl(x + (1 + \alpha)h\bigr)\log\left(\frac{x + (1 + \alpha)h}{2}\right) \leq -2h\log\left(\frac{h}{2}\right).
\end{multline*}
\end{lemma}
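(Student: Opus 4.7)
The plan is to mimic the strategy of Lemma~\ref{LEM:SingIntBound}, exploiting the fact that this statement is just the $k=0$ version of that lemma with the first logarithmic term shifted by $h$. Let me denote the left-hand side by $f(x)$, write $y_1 = x - (1 - \alpha)h$ and $y_2 = x + (1 + \alpha)h$, and use the identity $\frac{d}{dx}[y_i \log(\pm y_i/2)] = y_i'(\log(\pm y_i/2) + 1)$. Differentiating and canceling the constant terms, I expect to get
\[
f'(x) = \log\left(-\frac{x - (1 - \alpha)h}{x + (1 + \alpha)h}\right),
\]
exactly as in Lemma~\ref{LEM:SingIntBound}. Setting the argument of the logarithm equal to $1$ yields the unique critical point $x = -\alpha h$, which lies inside $R_0 = [(-1 - \alpha)h, \alpha h]$.

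Next I would show that this critical point is a global maximum on $R_0$. The cleanest way is to read off the sign of $f'$ directly from the closed form above: for $x < -\alpha h$ one has $-y_1 > h > y_2$, so $f'(x) > 0$, while for $x > -\alpha h$ one has $-y_1 < h < y_2$, so $f'(x) < 0$. Thus $f$ increases on $[(-1 - \alpha)h, -\alpha h]$ and decreases on $[-\alpha h, \alpha h]$, so $x = -\alpha h$ is indeed the global maximum on $R_0$. (Unlike in Lemma~\ref{LEM:SingIntBound}, $f$ does not blow up at the endpoints of $R_0$ — the $y \log(y/2)$ factors are merely bounded there — so the endpoint-limit argument used in that lemma does not apply verbatim, and the monotonicity argument is what I would use in its place.)

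Finally, I would evaluate $f$ at $x = -\alpha h$. At this point $y_1 = -h$ and $y_2 = h$, so
\[
f(-\alpha h) = (-h)\log\!\left(\frac{h}{2}\right) - h\log\!\left(\frac{h}{2}\right) = -2h\log\!\left(\frac{h}{2}\right),
\]
which matches the right-hand side of the claimed inequality exactly. The whole argument is routine computation; the only mild subtlety, and the reason the statement needed its own lemma rather than a corollary of Lemma~\ref{LEM:SingIntBound}, is the shift in the first logarithmic term that moves the critical point from $x = -h/2$ to $x = -\alpha h$ and thus forces one to redo the evaluation and the monotonicity check on the slightly different interval $R_0$.
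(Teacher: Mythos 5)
Your proposal is correct and follows essentially the same route as the paper: compute $f'(x) = \log\bigl(-\tfrac{x + (\alpha - 1)h}{x + (\alpha + 1)h}\bigr)$, locate the unique critical point $x = -\alpha h$ in $R_0$, argue it is the global maximum, and evaluate there to get $-2h\log(h/2)$. The only (immaterial) difference is that the paper certifies the maximum via $f''(x) = 2h/\bigl((x+\alpha h)^2 - h^2\bigr) < 0$ on $R_0$, whereas you read off the sign change of $f'$ directly; both are valid.
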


\begin{proof}
As in the previous argument, let $f(x)$ be the expression on the left-hand side
of the inequality.  We have
\[
f'(x) = \log\left(-\frac{x + (\alpha - 1)h}{x + (\alpha + 1)h}\right),
\]
and this vanishes in $R_0$ only at the point $x = -\alpha h$.  Moreover,
\[
f''(x) = \frac{2h}{(x + \alpha h)^2 - h^2}.
\]
The denominator of this function is a quadratic polynomial with positive
leading coefficient and zeroes at $(-1 - \alpha)h$ and $(1 - \alpha)h$.  Since
$x \in R_0$, we have $(-1 - \alpha)h \leq x \leq \alpha h < (1 - \alpha)h$, and
it follows that $f''$ is negative everywhere on $R_0$.  This implies that $f$
has a global maximum on $R_0$ at the critical point at $-\alpha h$ that we just
found.  Evaluating $f(-\alpha h)$ produces the right-hand side of the
inequality to be established.
\end{proof}

\medskip

\begin{lemma}\label{LEM:NumerBound}
For sufficiently large $K$ and $x \in R_k$, $0 \leq k \leq N$, we have
\[
P_k(x) \leq 5 \cdot 2^{-K} K
\]
for $k = 0, 1$ and
\[
P_k(x) \leq 3 \cdot 2^{-K} K^{2\alpha} \left|\sin\left(\frac{(k + 1 - 2\alpha)h}{2}\right)\sin\left(\frac{(k - 1)h}{2}\right)\right|^{\alpha - 1/2}
\]
for $2 \leq k \leq N$.
\end{lemma}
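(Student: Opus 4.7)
The strategy is to take logarithms and bound the resulting sum of $K - 1$ terms of the form $\log|\sin((x - t_j)/2)|$ by an integral, exploiting the fact that the $t_j$ form a perturbed equispaced grid on $[-\pi, \pi]$ from which the node at $0$ has been removed and whose two neighbors of $0$ have been pulled outward by $\alpha h$. The dominant $2^{-K}$ factor will come from the classical identity
\[
\int_{-\pi}^{\pi} \log|\sin((x - t)/2)|\,dt = -2\pi \log 2,
\]
which, scaled by the sample density $1/h = K/(2\pi)$, contributes $-K \log 2$ to the exponent. The subleading $K$, $K^{2\alpha}$, and $|\sin((k+1-2\alpha)h/2)\sin((k-1)h/2)|^{\alpha - 1/2}$ factors will arise from corrections near the logarithmic singularity at $t = x$ and near the missing grid node at the origin.

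The key step is to isolate the terms whose sample points cannot be well controlled by integral comparison. First, extract the two \emph{bracketing} factors $|\sin((x + (k - \alpha)h)/2)|$ and $|\sin((x + (k + 1 + \alpha)h)/2)|$, whose sample points straddle the singularity at $t = x$; their product is bounded above by $|\sin((\alpha + 1/2)h/2)|^2$ by Lemma~\ref{LEM:bkckp1Bound}. For $k = 0$ and $k = 1$, also extract the additional near-origin factors covered by Lemmas~\ref{LEM:LooseTermBoundkeq0} and~\ref{LEM:LooseTermsBoundkeq1} respectively. For the remaining terms, use concavity of $t \mapsto \log|\sin((x - t)/2)|$ on each inter-sample interval to bound the sum above by $1/h$ times the integral over $[-\pi, \pi]$ minus a small gap $G$ around the singularity. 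Finally, close the gap by bounding the integral contribution on $G$; near the singularity $\log|\sin((x - t)/2)|$ behaves like $\log|x - t|$, and the relevant defect takes the form $-2u \log u - 2v \log v$ with $u, v > 0$ the distances from $x$ to the bracketing samples, which is exactly what Lemmas~\ref{LEM:SingIntBound} and~\ref{LEM:SingIntBoundkeq0} are designed to bound.

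For $k \in \{0, 1\}$, combining these ingredients and exponentiating yields the bound $5 \cdot 2^{-K} K$ directly. For $k \geq 2$, the remaining step is to invoke Lemma~\ref{LEM:a1pb1Bound}: it gives a lower bound on $|\sin((x - (1 - \alpha)h)/2) \sin((x + (1 - \alpha)h)/2)|$, and this factor appears in the eventual bound with net exponent $\alpha - 1/2 < 0$, so that inverting the lower bound yields an upper bound on $P_k(x)$. The accounting of the Jensen-type correction from Lemma~\ref{LEM:SingIntBound} produces the remaining $K^{2\alpha}$ prefactor.

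The main obstacle will be the careful bookkeeping of constants and exponents in the final combination, to ensure that the singular extractions, the integral's gap correction, and the near-origin adjustments all combine to give exactly the stated bound, and in particular that the fractional exponent $\alpha - 1/2$ on the $|\sin|$ factor emerges correctly for $k \geq 2$. The integral-comparison step is conceptually straightforward by concavity but requires care because the sample points form a perturbed grid with non-uniform spacing, and the inter-sample intervals must be chosen not to straddle any zero of $\sin((x - t)/2)$.
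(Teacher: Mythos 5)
Your proposal is correct and follows essentially the same route as the paper's proof: take logarithms, compare the resulting sum to $\int_{-\pi}^{\pi}\log|\sin((x+t)/2)|\,dt = -2\pi\log 2$ via concavity (trapezoidal-rule underestimation), extract the bracketing factors at the singularity (Lemma~\ref{LEM:bkckp1Bound}) and the near-origin factors for $k=0,1$, close the singular gap with Lemmas~\ref{LEM:SingIntBound} and~\ref{LEM:SingIntBoundkeq0}, and obtain the $\alpha - 1/2$ exponent from Lemma~\ref{LEM:a1pb1Bound}. The only ingredient you do not mention explicitly is the wraparound correction near $t = \pm\pi$ (and the corresponding case $k = N$ where the singularity itself wraps around), but this is handled by the same trapezoidal-rule device and does not change the structure of the argument.
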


\begin{proof}
Let $S_k(x) = \log P_k(x)$.  For $1 \leq j \leq N$, define $a_j(x)$, $b_j(x)$,
and $c_j(x)$ by
\begin{align*}
a_j(x) &= \log\left|\sin\left(\frac{x - (j - \alpha)h}{2}\right)\right|, \\
b_j(x) &= \log\left|\sin\left(\frac{x + (j - \alpha)h}{2}\right)\right|, \\
c_j(x) &= \log\left|\sin\left(\frac{x + (j + \alpha)h}{2}\right)\right|.
\end{align*}
For brevity, we will typically suppress the argument when referring to these
quantities, writing $a_j$ in place of $a_j(x)$, and so forth.  Let
\begin{align*}
A_k(x) &= \sum_{j = 1}^{N - 1} \frac{1}{2} h(a_j + a_{j + 1}), \\
B_k(x) &= \sum_{j = 1}^{k - 1} \frac{1}{2}h(b_j + b_{j + 1}), \\
C_k(x) &= \sum_{j = k + 1}^{N - 1} \frac{1}{2}h(c_j + c_{j + 1}),
\end{align*}
and note that
\[
hS_k(x) = A_k(x) + B_k(x) + C_k(x) + \frac{1}{2}h(a_1 + a_N + b_1 + b_k + c_{k + 1} + c_N).
\]

The sums $A_k(x)$, $B_k(x)$, and $C_k(x)$ are composite trapezoidal rule
approximations to the integral of $\log\bigl|\sin\bigl((x + t)/2\bigr)\bigr|$
(with respect to $t$) over certain subintervals of $[-\pi, \pi]$.  Since this
function is concave-down everywhere on $[-\pi, \pi]$, these approximations will
yield lower bounds on the corresponding integrals \cite[p.\ 54]{DR1984_2e}.
More precisely, we have
\begin{align*}
A_k(x) &\leq \int_{-(N - \alpha)h}^{-(1 - \alpha)h} \log\left|\sin\left(\frac{x + t}{2}\right)\right| \: dt, \\
B_k(x) &\leq \int_{(1 - \alpha)h}^{(k - \alpha)h} \log\left|\sin\left(\frac{x + t}{2}\right)\right| \: dt, \\
C_k(x) &\leq \int_{(k + 1 + \alpha)h}^{(N + \alpha)h} \log\left|\sin\left(\frac{x + t}{2}\right)\right| \: dt,
\end{align*}
where the inequality for $B_k(x)$ holds for $1 \leq k \leq N$ and the
inequality for $C_k(x)$ holds for $0 \leq k \leq N - 1$.  We consider
four cases.

\emph{\textit{Case 1}:  $2 \leq k \leq N - 1$.}\ \hairspace In this case,
the preceding developments yield
\begin{multline*}
hS_k(x) \leq \int_{-\pi}^\pi - \int_{-\pi}^{-(N - \alpha)h} - \int_{-(1 - \alpha)h}^{(1 - \alpha)h} - \int_{(k - \alpha)h}^{(k + 1 + \alpha)h} - \int_{(N + \alpha)h}^\pi \log\left|\sin\left(\frac{x + t}{2}\right)\right| \: dt \\
+ \frac{1}{2}h(a_1 + a_N + b_1 + b_k + c_{k + 1} + c_N).
\end{multline*}
Now we just need to bound the integrals and loose terms on the right-hand side
of this inequality.  It turns out that the first integral can be evaluated
explicitly \cite[4.384-7]{GR2007_7e}:
\begin{equation}\label{EQN:IntLogSinto2}
\int_{-\pi}^\pi \log \left|\sin\left(\frac{x + t}{2}\right)\right| \: dt = -\pi\log(4).
\end{equation}
For the second and fifth integrals, we have the following bound, which can be
derived by applying the trapezoidal rule to the integral from $(N + \alpha)h$ to
$2\pi - (N - \alpha)h$ and using the periodicity of the integrand:
\begin{equation}\label{EQN:WraparoundTrapRule}
-\int_{-\pi}^{-(N - \alpha)h} - \int_{(N + \alpha)h}^\pi \log\left|\sin\left(\frac{x + t}{2}\right)\right| \: dt \leq -\frac{1}{2}h (a_{N} + c_{N}).
\end{equation}
The fourth integral requires some care, since it has a singularity in the
interval of integration at the point $t = -x$.  (Recall our assumption that $x
\in R_k = [(-k - 1 - \alpha)h, (-k + \alpha)h]$.)  We therefore split the
integral into two parts at that point.  Noting the expansion
\begin{equation}\label{EQN:logsinExpn}
\log\bigl(\sin(t)\bigr) = \log(t) - \frac{1}{6} t^2 - \frac{1}{180} t^4 + O(t^6)_{t \to 0^+},
\end{equation}
we have
\[
-\int_{(k - \alpha)h}^{-x} \log\left|\sin\left(\frac{x + t}{2}\right)\right| \: dt = \bigl(x + (k - \alpha)h\bigr)\left[\log\left(-\frac{x + (k - \alpha)h}{2}\right) - 1\right] + O(h^3)
\]
and
\begin{multline*}
-\int_{-x}^{(k + 1 + \alpha)h} \log\left|\sin\left(\frac{x + t}{2}\right)\right| \: dt \\
= \bigl(x + (k + 1 + \alpha)h\bigr)\left[1 - \log\left(\frac{x + (k + 1 + \alpha)h}{2}\right)\right] + O(h^3).
\end{multline*}
Adding these expressions together and applying
Lemma~\ref{LEM:SingIntBound}, we obtain
\[
-\int_{(k - \alpha)h}^{(k + 1 + \alpha)h} \log\left|\sin\left(\frac{x + t}{2}\right)\right| \: dt \leq (1 + 2\alpha)h -(1 + 2\alpha)h\log\left(\frac{(\alpha + 1/2)h}{2}\right) + O(h^3).
\]
For the third integral, we use another trapezoidal rule bound and combine the
result with the loose terms $(1/2)h(a_1 + b_1)$ to yield
\begin{multline}\label{EQN:BadIneqForkeq1}
-\int_{-(1 - \alpha)h}^{(1 - \alpha)h} \log \left|\sin\left(\frac{x + t}{2}\right)\right| \: dt + \frac{1}{2}h(a_1 + b_1)\leq \left(\alpha - \frac{1}{2}\right)h(a_1 + b_1) \\
\leq \left(\alpha - \frac{1}{2}\right)h\log\left|\sin\left(\frac{(k + 1 - 2\alpha)h}{2}\right)\sin\left(\frac{(k - 1)h}{2}\right)\right|,
\end{multline}
where the second inequality follows from Lemma~\ref{LEM:a1pb1Bound} and
the fact that $\alpha < 1/2$.  By Lemma~\ref{LEM:bkckp1Bound}
and \eqref{EQN:logsinExpn}, we now have
\begin{equation}\label{EQN:LooseTermsBound}
\frac{1}{2}h(b_k + c_{k + 1}) \leq h\log\left(\frac{(\alpha + 1/2)h}{2}\right) + O(h^3).
\end{equation}
Putting all of these results together, we conclude that
\begin{multline}\label{EQN:LogNumerBoundMainCase}
hS_k(x) \leq -\pi\log(4) + \left(\alpha - \frac{1}{2}\right)h\log\left|\sin\left(\frac{(k + 1 - 2\alpha)h}{2}\right)\sin\left(\frac{(k - 1)h}{2}\right)\right| \\
- 2\alpha h\log\left(\frac{(\alpha + 1/2)h}{2}\right) + (1 + 2\alpha)h + O(h^3).
\end{multline}
Dividing through by $h$, exponentiating, and suitably relaxing the constants
that emerge, we obtain the claimed bound in this case.

\emph{\textit{Case 2}:  $k = 1$.}\ \hairspace This case is similar to
the previous one.  In particular, all the same integral bounds apply except
that the second inequality in \eqref{EQN:BadIneqForkeq1} is meaningless
because the argument to the logarithm function vanishes.  We replace
\eqref{EQN:BadIneqForkeq1} and \eqref{EQN:LooseTermsBound} with
\begin{multline*}
-\int_{-(1 - \alpha)h}^{(1 - \alpha)h} \log \left|\sin\left(\frac{x + t}{2}\right)\right| \: dt + \frac{1}{2}h(a_1 + 2b_1 + c_2)\leq \left(\alpha - \frac{1}{2}\right)ha_1 + \frac{1}{2}hc_2 + \alpha h b_1 \\
\leq \left(\alpha - \frac{1}{2}\right)h\log\bigl((1 - \alpha)h\bigr) + \left(\alpha + \frac{1}{2}\right)h\log\left(\frac{(1 + 2\alpha)h}{2}\right) + O(h^3),
\end{multline*}
where the second inequality follows from
Lemma~\ref{LEM:LooseTermsBoundkeq1} and \eqref{EQN:logsinExpn}.
Combining this with the other results just established, we obtain
\begin{multline*}
hS_1(x) \leq -\pi\log(4) + \left(\alpha - \frac{1}{2}\right)h\log\bigl((1 - \alpha)h\bigr) + \left(\alpha + \frac{1}{2}\right)h\log\left(\frac{(1 + 2\alpha)h}{2}\right) \\
- (1 + 2\alpha) h\log\left(\frac{(\alpha + 1/2)h}{2}\right) + (1 + 2\alpha)h + O(h^3),
\end{multline*}
and this implies the claimed bound for this case.

\emph{\textit{Case 3}:  $k = N$.}\ \hairspace Since $C_N(x)$ has no terms,
we have, in this case,
\begin{multline*}
hS_N(x) \leq \int_{-\pi}^\pi - \int_{-\pi}^{-(N - \alpha)h} - \int_{-(1 - \alpha)h}^{(1 - \alpha)h} - \int_{(N - \alpha)h}^\pi \log\left|\sin\left(\frac{x + t}{2}\right)\right| \: dt \\
+ \frac{1}{2}h(a_1 + a_N + b_1 + b_N).
\end{multline*}
We can bound the third integral and the loose terms $(1/2)h(a_1 + b_1)$ using
\eqref{EQN:BadIneqForkeq1}; however, we cannot use
\eqref{EQN:WraparoundTrapRule} to bound the second and fourth integrals.
Instead, noting that there is a singularity at $-x$ (or a periodic image
thereof) within the domain of integration, we use \eqref{EQN:logsinExpn}
to find that
\[
-\int_{(N - \alpha)h}^{-x} \log\left|\sin\left(\frac{x + t}{2}\right)\right| \: dt = \bigl(x + (N - \alpha)h\bigr)\left[\log\left(-\frac{x + (N - \alpha)h}{2}\right) - 1\right] + O(h^3)
\]
and
\begin{multline*}
-\int_{-x}^{2\pi - (N - \alpha)h} \log\left|\sin\left(\frac{x + t}{2}\right)\right| \: dt \\
= \bigl(2\pi + x - (N - \alpha)h\bigr)\left[1 - \log\left(\frac{2\pi + x - (N - \alpha)h}{2}\right)\right] + O(h^3).
\end{multline*}
Noting that $2\pi + x - (N - \alpha)h = x + (N + 1 + \alpha)h$, we can add
these together and use periodicity and Lemma~\ref{LEM:SingIntBound} to
obtain
\begin{multline*}
-\int_{-\pi}^{-(N - \alpha)h} - \int_{(N - \alpha)h}^\pi \log\left|\sin\left(\frac{x + t}{2}\right)\right| \: dt \\
\leq (1 + 2\alpha)h - (1 + 2\alpha)h\log\left(\frac{(\alpha + 1/2)h}{2}\right) + O(h^3).
\end{multline*}
By the same identity, Lemma~\ref{LEM:bkckp1Bound}, and
\eqref{EQN:logsinExpn}, we have
\[
\frac{1}{2}h(a_N + b_N) \leq h \log\left(\frac{(\alpha + 1/2)h}{2}\right) + O(h^3).
\]
Putting everything together, we arrive once again at
\eqref{EQN:LogNumerBoundMainCase}, which finishes the argument in this
case.

\emph{\textit{Case 4}:  $k = 0$.}\ \hairspace As $B_0(x)$ has no terms, we
have
\begin{multline*}
hS_0(x) \leq \int_{-\pi}^\pi - \int_{-\pi}^{-(N - \alpha)h} - \int_{-(1 - \alpha)h}^{(1 + \alpha)h} - \int_{(N + \alpha)h}^\pi \log\left|\sin\left(\frac{x + t}{2}\right)\right| \: dt \\
+ \frac{1}{2}h(a_1 + a_N + c_1 + c_N).
\end{multline*}
We can take care of the second and fourth integrals and the loose terms
$(1/2)h(a_N + c_N)$ using \eqref{EQN:WraparoundTrapRule}.  For the third
integral, noting that $-x$ lies in the interval of integration, we use
\eqref{EQN:logsinExpn} one more time to conclude that
\[
-\int_{-(1 - \alpha)h}^{-x} \log\left|\sin\left(\frac{x + t}{2}\right)\right| \: dt = \bigl(x - (1 - \alpha)h\bigr)\left[\log\left(-\frac{x - (1 - \alpha)h}{2}\right) - 1\right] + O(h^3)
\]
and
\[
-\int_{-x}^{(1 + \alpha)h} \log\left|\sin\left(\frac{x + t}{2}\right)\right| \: dt = \bigl(x + (1 + \alpha)h\bigr)\left[1 - \log\left(\frac{x + (1 + \alpha)h}{2}\right)\right] + O(h^3).
\]
Adding these together and using Lemma~\ref{LEM:SingIntBoundkeq0}, we have
\[
-\int_{-(1 - \alpha)h}^{(1 + \alpha)h} \log\left|\sin\left(\frac{x + t}{2}\right)\right| \: dt \leq -2h\log\left(\frac{h}{2}\right) + 2h + O(h^3).
\]
By Lemma~\ref{LEM:LooseTermBoundkeq0} and \eqref{EQN:logsinExpn}, we have
\[
\frac{1}{2}h(a_1 + c_1) \leq h\log\left(\frac{h}{2}\right) + O(h^3).
\]
Assembling all these facts, we find that
\[
hS_0(x) \leq -\pi \log(4) - h\log\left(\frac{h}{2}\right) + 2h + O(h^3),
\]
and upon dividing through by $h$, exponentiating, and adjusting the constant
factors that arise, we obtain the desired result.

All cases have now been handled.  The proof is complete.
\end{proof}

\medskip

Next, we bound $Q_k$.  The result we need is the following:

\medskip

\begin{lemma}\label{LEM:DenomBound}
For sufficiently large $K$,
\[
Q_k \geq (1 - 2\alpha) 2^{-K} K^{1 - 2\alpha} \left|\sin\left(\frac{(k + 1/2 + \alpha)h}{2}\right)\right|^{-2\alpha}.
\]
\end{lemma}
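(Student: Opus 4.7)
The plan is to mirror the strategy used in Lemma~\ref{LEM:NumerBound}, but in the opposite direction: we want a \emph{lower} bound on $\log Q_k$, so instead of the trapezoidal rule (which underestimates the integral of a concave function and so gives upper bounds on the relevant sums), we use the midpoint rule (which overestimates the integral of a concave function). Writing $T_k=\log Q_k=S_1+S_2+S_3$, where
\begin{align*}
S_1 &= \sum_{j=1}^{N}\log\sin\bigl((j-2\alpha)h/2\bigr),\\
S_2 &= \sum_{j=1}^{k}\log\sin(jh/2),\\
S_3 &= \sum_{j=k+1}^{N}\log\sin\bigl((j+2\alpha)h/2\bigr),
\end{align*}
one observes that every argument lies in $(0,\pi)$, where $f(u)=\log\sin u$ is concave (since $f''=-\tfrac14\csc^2(u/2)<0$ after the standard substitution). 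The spacing between successive sample points inside each of $S_1$, $S_2$, $S_3$ is $h/2$, so the midpoint-rule inequality $(h/2)f(u_j)\geq\int_{u_j-h/4}^{u_j+h/4}f(u)\,du$ applies on each subinterval that stays inside a concavity region.

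Next I would identify the union of midpoint subintervals: after a small amount of arithmetic these turn out to be essentially $[(1-4\alpha)h/4,\pi/2-\alpha h]$ for $S_1$, $[h/4,(2k+1)h/4]$ for $S_2$, and $[(2k+1+4\alpha)h/4,\pi/2+\alpha h]$ for $S_3$. For $\alpha\geq 1/4$ the first subinterval of $S_1$ dips across the singularity of $f$ at $0$, so I would pull out the $j=1$ term of $S_1$ and keep it as an explicit boundary contribution $(h/2)\log\sin\bigl((1-2\alpha)h/2\bigr)$, applying the midpoint rule only to $j=2,\ldots,N$. Combining the three resulting integrals (after an easy shift in the $S_1$ integral to align with the others) and using periodicity and the identity $\int_0^{\pi}\log|\sin u|\,du=-\pi\log 2$ (equivalently, \eqref{EQN:IntLogSinto2}), the bulk contribution produces the factor $2^{-K}$ in the exponent of the final bound.

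What is left after this cancellation consists of (i) a small integral over the ``gap'' $[(2k+1)h/4,(2k+1+4\alpha)h/4]$ of length $\alpha h$ straddling the jump between the $S_2$ and $S_3$ patterns, which is evaluated using the expansion \eqref{EQN:logsinExpn} in terms of $\log\sin\bigl((k+1/2+\alpha)h/2\bigr)$, and (ii) short boundary integrals of length $O(h)$ near $u=0$ and $u=\pi/2\pm\alpha h$, which are also handled with \eqref{EQN:logsinExpn}, together with the extracted $(h/2)\log\sin\bigl((1-2\alpha)h/2\bigr)$ term from $S_1$. Divide by $h/2$ and exponentiate: the length-$2\alpha h$ gap integral contributes the factor $|\sin((k+1/2+\alpha)h/2)|^{-2\alpha}$; the boundary integrals together with the $j=1$ contribution produce a power of $K$ of the form $K^{1-2\alpha}$ (the exponent $1$ coming from the singular $\log(h/2)=-\log K+\log(2\pi)$ behavior of $\log\sin$ near $0$, reduced by $2\alpha$ from the $-2\alpha h\log((\alpha+1/2)h/2)$-type loose term that arises symmetrically on both ends), and the $(1-2\alpha)$ prefactor comes from bounding $\sin(2\pi\alpha)/\sin(\alpha h)\geq 2K(1-2\alpha)/\pi$ using $\cos(\pi\alpha)\geq 1-2\alpha$ and $\sin(\alpha h)\leq\alpha h$. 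Relaxing constants for sufficiently large $K$ yields the claimed inequality.

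The main obstacle is keeping the bookkeeping straight: there are three different midpoint-rule intervals with different centers, they overlap and leave small gaps in intricate ways depending on whether $\alpha<1/4$ or $\alpha\geq 1/4$, and extracting precisely the exponent $-2\alpha$ on $|\sin((k+1/2+\alpha)h/2)|$ (rather than some other exponent) requires matching the length of the ``jump'' gap to the exponent produced by the singular-integral expansion. A cleaner shortcut, if desired, is to first use Chebyshev's identity $\prod_{j=0}^{K-1}\sin(\theta+j\pi/K)=\sin(K\theta)/2^{K-1}$ (with $\theta=\pm\alpha h$) to collapse $\prod_{j=1}^{N}\sin((j-2\alpha)h/2)\cdot\prod_{j=1}^{N}\sin((j+2\alpha)h/2)$ to $\sin(2\pi\alpha)/(2^{K-1}\sin(\alpha h))$, reducing the lemma to a lower bound on $\prod_{j=1}^{k}\sin(jh/2)/\sin((j+2\alpha)h/2)$, which by concavity of $\log\sin$ satisfies $\log\prod\geq-\alpha h\sum_{j=1}^{k}\cot(jh/2)$; an integral comparison for this last sum then isolates the $|\sin((k+1/2+\alpha)h/2)|^{-2\alpha}$ factor directly.
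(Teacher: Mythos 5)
Your main argument is essentially the paper's proof: a midpoint-rule lower bound on $\log Q_k$ exploiting the concavity of $\log|\sin|$, with the same split at $\alpha = 1/4$ (pulling out the $j=1$ term of the first product when its midpoint cell straddles the singularity), the gap near the origin computed explicitly via \eqref{EQN:logsinExpn}, and the gap at $(k+1/2+\alpha)h$ producing the factor $|\sin((k+1/2+\alpha)h/2)|^{-2\alpha}$. Two small imprecisions: that second gap is handled by one more application of the midpoint rule, not by \eqref{EQN:logsinExpn} (its midpoint is not near the singularity for general $k$); and as literally stated your three midpoint intervals inside $[0,\pi]$ overlap, so you must reflect the $S_1$ samples to the negative axis (equivalently, distribute the three families around one full period, as the paper does) so that they interleave disjointly --- overlaps of a nonpositive integrand would weaken the lower bound. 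Your closing ``shortcut'' is a genuinely different route: Chebyshev's product identity collapses the first and third products exactly to $\sin(2\pi\alpha)/(2^{K-1}\sin(\alpha h))$, delivering the $2^{-K}$, the power of $K$, and the $(1-2\alpha)$ in one stroke and avoiding all singular-integral expansions; what remains is the ratio $\prod_{j=1}^k \sin(jh/2)/\sin\bigl((j+2\alpha)h/2\bigr)$, bounded below via a cotangent sum, at the cost that the comparison naturally yields $|\sin((k+1/2)h/2)|^{-2\alpha}$ rather than the stated $|\sin((k+1/2+\alpha)h/2)|^{-2\alpha}$, so a final exchange of these two factors (with a little care at $k=N$, where the argument passes $\pi/2$) is needed.
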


\begin{proof}
The proof is similar in structure to that of Lemma~\ref{LEM:NumerBound}.
Let $S_k = \log(Q_k)$, so that
\begin{equation}\label{EQN:QkBoundSums}
S_k = \sum_{j = 1}^N \log\left|\sin\left(\frac{(2\alpha - j)h}{2}\right)\right| + \sum_{j = 1}^k \log\left|\sin\left(\frac{jh}{2}\right)\right| + \sum_{j = k + 1}^N \log\left|\sin\left(\frac{(2\alpha + j)h}{2}\right)\right|.
\end{equation}
We will bound $S_k$ using integrals of $\log\bigl|\sin(t/2)\bigr|$, just as
before; but this time, since we seek a lower bound, we use the midpoint rule
instead of the trapezoidal rule \cite[p.\ 54]{DR1984_2e}.  Assuming $0
\leq \alpha \leq 1/4$, we have
\begin{equation}\label{EQN:QkBoundInts}
hS_k \geq \int_{-\pi}^\pi - \int_{(2\alpha - 1/2)h}^{h/2} - \int_{(k + 1/2)h}^{(k + 2\alpha + 1/2)h} \log\left|\sin\left(\frac{t}{2}\right)\right| \: dt.
\end{equation}
We evaluated the first integral in \eqref{EQN:IntLogSinto2}, above.  We
bound the third integral using the midpoint rule:
\[
-\int_{(k + 1/2)h}^{(k + 2\alpha + 1/2)h} \log\left|\sin\left(\frac{t}{2}\right)\right| \: dt \geq -2\alpha h \log\left|\sin\left(\frac{(k + 1/2 + \alpha)h}{2}\right)\right|.
\]
For the second integral, we split the interval of integration at the
singularity at $0$ and use \eqref{EQN:logsinExpn} to compute
\begin{multline*}
-\int_{(2\alpha - 1/2)h}^{h/2} \log\left|\sin\left(\frac{t}{2}\right)\right| \: dt = \left(2\alpha - \frac{1}{2}\right)h\log\left(\frac{(1/2 - 2\alpha)h}{2}\right) \\ - \frac{h}{2} \log\left(\frac{h}{4}\right) + (1 - 2\alpha)h + O(h^3).
\end{multline*}
From these results, it follows that
\begin{multline*}
hS_k \geq -\pi \log(4) -2\alpha h \log\left|\sin\left(\frac{(k + 1/2 + \alpha)h}{2}\right)\right| \\
+ \left(2\alpha - \frac{1}{2}\right)h\log\left(\frac{(1/2 - 2\alpha)h}{2}\right) - \frac{h}{2} \log\left(\frac{h}{4}\right) + (1 - 2\alpha)h + O(h^3).
\end{multline*}
Dividing through by $h$, exponentiating, and suitably adjusting the constant
factors that arise, we obtain the claimed result.

If $1/4 < \alpha < 1/2$, the argument is similar except that we have to track
the $j = 1$ term in the first sum in the definition of $S_k$ independently.  We
write
\[
hS_k \geq \int_{-\pi}^{\pi} - \int_{(2\alpha - 3/2)h}^{h/2} - \int_{(k + 1/2)h}^{(k + 2\alpha + 1/2)h} \log\left|\sin\left(\frac{t}{2}\right)\right| \: dt + h\log\left|\sin\left(\frac{(2\alpha - 1)h}{2}\right)\right|.
\]
Using \eqref{EQN:logsinExpn} one last time, we compute
\[
h\log\left|\sin\left(\frac{(2\alpha - 1)h}{2}\right)\right| = h \log\left(\frac{(1 - 2\alpha)h}{2}\right) + O(h^3).
\]
and
\begin{multline*}
-\int_{(2\alpha - 3/2)h}^{h/2} \log\left|\sin\left(\frac{t}{2}\right)\right| \: dt = \left(2\alpha - \frac{3}{2}\right)\log\left(\frac{(3/2 - 2\alpha)h}{2}\right) \\ - \frac{h}{2}\log\left(\frac{h}{4}\right) + 2(1 - \alpha)h + O(h^3).
\end{multline*}
Therefore,
\begin{multline*}
hS_k \geq -\pi\log(4) -2\alpha h \log\left|\sin\left(\frac{(k + 1/2 + \alpha)h}{2}\right)\right| + h \log\left(\frac{(1 - 2\alpha)h}{2}\right) \\
+ \left(2\alpha - \frac{3}{2}\right)\log\left(\frac{(3/2 - 2\alpha)h}{2}\right) - \frac{h}{2}\log\left(\frac{h}{4}\right) + 2(1 - \alpha)h + O(h^3).
\end{multline*}
and this implies the claimed bound in the usual way.
\end{proof}

\medskip

Note that the proof of this lemma shows that the constant $1 - 2\alpha$ can be
dropped from the bound when $0 \leq \alpha \leq 1/4$.  We have chosen for
simplicity to include it in this case anyway because omitting it will at best
improve our final results by a small constant factor.

At this point, we have all that we need to bound $|\tilde{\ell}_0(x)|$
uniformly for $x \in [-\pi, \pi]$ and independent of the points $\tilde{x}_j$:
evaluate the bounds on $M_k$ for $x \in [-\pi, 0] \cap R_k$ given by
Lemmas~\ref{LEM:NumerBound} and \ref{LEM:DenomBound}, and take the
maximum over $k$.  Lemma~\ref{LEM:l0CrazyProductBound} shows that the
result bounds $|\tilde{\ell}_0(x)|$ for $x \in [-\pi, 0]$.  By symmetry, the
same bound must hold for $x \in [0, \pi]$ as well.  Even further, by
considering circular rotations of the points $\tilde{x}_j$, the bound can be
seen to apply to $|\tilde{\ell}_k(x)|$ for $k \neq 0$.  Therefore, by
\eqref{EQN:PertLebConst}, we could bound $\tilde{\Lambda}_N$ by
multiplying the bound on $|\tilde{\ell}_0(x)|$ by $K$.

We can do better than this, however, because
Lemmas~\ref{LEM:l0CrazyProductBound} and \ref{LEM:NumerBound} retain some
information about how $|\tilde{\ell}_0(x)|$ varies with $x$ through the
hypothesis that $x \in R_k$.  We can use this information to get a better bound
on $|\tilde{\ell}_k(x)|$ for $k \neq 0$ than the one just described.  The
result we need is given by the following lemma, which we could have proved
earlier but have delayed until now.

\medskip

\begin{lemma}\label{LEM:lkBounds}
If $x \in R_p^*$, $0 \leq p \leq N$, then for $-N \leq k \leq N$,
\begin{equation}\label{EQN:lkBounds}
|\tilde{\ell}_k(x)| \leq \begin{cases}
\max(M_{-(p + k)}, M_{-(p + k + 1)}, M_{-(p + k + 2)}) & -N \leq p + k \leq -2 \\
\max(M_0, M_1) & p + k = -1, 0 \\
\max(M_{p + k - 1}, M_{p + k}, M_{p + k + 1}) & 1 \leq p + k \leq N - 1 \\
\max(M_{N - 1}, M_{N}) & p + k = N \\
\max(M_{K - (p + k)}, M_{K - (p + k + 1)}, M_{K - (p + k + 2)}) & N + 1 \leq p + k \leq 2N - 1 \\
\max(M_0, M_1) & p + k = 2N.
\end{cases}
\end{equation}
\end{lemma}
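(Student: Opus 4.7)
The plan is to reduce the bound on $|\tilde{\ell}_k(x)|$ to the bound on $|\tilde{\ell}_0|$ from Lemma~\ref{LEM:l0CrazyProductBound} by exploiting two symmetries of the perturbed-grid setup: circular rotation and reflection about the origin. Given any perturbed grid $\{\tilde{x}_j\}$ and any index $k$, I would first observe that the relabeled-and-shifted grid $\tilde{y}_i = \tilde{x}_{i+k} - kh$ (with $i+k$ reduced modulo $K$ and $\tilde{x}_{i+k}$ adjusted by $\pm 2\pi$ as needed to land in $[-\pi, \pi)$) is again a valid perturbed grid in the sense of \eqref{EQN:PertPts}. Since each $\tilde{\ell}_k$ is $2\pi$-periodic in $x$ (the product has an even number $2N$ of sine factors, so the sign changes under $x \mapsto x + 2\pi$ cancel), a direct rearrangement of the product formula yields $\tilde{\ell}_k^{\mathrm{orig}}(x) = \tilde{\ell}_0^{\mathrm{rot}}(x - kh)$, where $x - kh$ is taken modulo $2\pi$. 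Analogously, the reflected grid $\tilde{z}_i = -\tilde{x}_{-i}$ is a valid perturbed grid and satisfies $\tilde{\ell}_0^{\mathrm{refl}}(-y) = \tilde{\ell}_0^{\mathrm{orig}}(y)$, so bounds on $|\tilde{\ell}_0|$ established for arguments in $[-\pi, 0]$ transfer to arguments in $[0, \pi]$ by the substitution $y \mapsto -y$.

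The second ingredient is a minor weakening of Lemma~\ref{LEM:l0CrazyProductBound} that is independent of the specific perturbation. Lemma~\ref{LEM:xjsBounds} guarantees that each $R_q^* \subset R_q$, and a quick comparison of the possible locations of the endpoints $x_{-q-1}^*$ and $x_{-q}^*$ shows that any $y \in R_q \cap [-\pi, 0]$ must belong to one of the three adjacent intervals $R_{q-1}^*, R_q^*, R_{q+1}^*$; hence $|\tilde{\ell}_0(y)| \leq \max(M_{q-1}, M_q, M_{q+1})$ uniformly in the perturbation whenever $y \in R_q \cap [-\pi, 0]$. The three-term maximum degenerates to $\max(M_0, M_1)$ at $q = 0$ and to $\max(M_{N-1}, M_N)$ at $q = N$, because $R_{-1}^*$ and $R_{N+1}^*$ are not defined.

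With these two tools in hand, the rest of the argument is a case analysis on the value of $p + k$. For $x \in R_p^* \subset R_p$, one computes immediately that $x - kh$ lies in the interval $[(-p - k - 1 - \alpha)h, (-p - k + \alpha)h]$. When $1 \leq p + k \leq N - 1$ this interval coincides with $R_{p+k} \subset [-\pi, 0]$, and the stated three-term bound follows directly. At the boundary value $p + k = N$ the interval meets $-\pi$ (so $R_{N+1}^*$ is unavailable), and at $p + k \in \{-1, 0\}$ the interval straddles the origin (so reflection is invoked and $R_{-1}^*$ is unavailable); these produce the truncated two-term maxima. For $N + 1 \leq p + k \leq 2N$, adding $2\pi$ sends the shifted interval into $[0, \pi]$, and after reflection the image lies in $R_{K - (p+k) - 1}$ or, in the boundary case $p + k = 2N$, in $R_0$, producing the remaining two bounds in the statement.

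The main obstacle is the bookkeeping: verifying for each of the six cases that the combination of rotation, modular reduction, and reflection really lands the image of $x$ in precisely the $R_q$ claimed, and checking that the endpoint cases $p + k \in \{-1, 0, N, 2N\}$ lose exactly the $M$'s that were never defined to begin with. No new analytic input is required beyond the two symmetries described above and the earlier lemmas; the work lies entirely in executing the case split cleanly.
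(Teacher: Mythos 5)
Your proposal is correct and follows essentially the same route as the paper's proof: reduce to $\tilde{\ell}_0$ via a circular shift of the grid (noting the shifted points form another valid perturbed grid), use reflection about the origin to transfer bounds from $[-\pi,0]$ to $[0,\pi]$, observe via Lemma~\ref{LEM:xjsBounds} that any point of $R_q\cap[-\pi,0]$ lies in one of $R_{q-1}^*$, $R_q^*$, $R_{q+1}^*$, and then run the case analysis on $p+k$. The only difference is organizational (you isolate the three-interval covering as a standalone observation, and you leave the six-way bookkeeping as an exercise), but every case you sketch lands in the correct $R_q$ and matches the paper's treatment, including the truncated two-term maxima at $p+k\in\{-1,0,N,2N\}$.
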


\begin{proof}
For $k = 0$, the result follows from
Lemma~\ref{LEM:l0CrazyProductBound}, which actually gives a stronger
bound.  The proof for $k \neq 0$ is ultimately just a matter of reducing it to
the $k = 0$ case by exploiting circular and reflectional symmetry; however,
there are some subtleties, so we will spell out the details to make things
clear.  Note that $x \in R_p^*$ implies $x \in R_p$ by
Lemma~\ref{LEM:xjsBounds}.

First, suppose that $1 \leq k \leq N$.  Then, $1 \leq p + k \leq 2N$, so only
the last four cases in \eqref{EQN:lkBounds} are relevant.  Let
\[
\hat{x}_j = \begin{cases}
\tilde{x}_{j + k} - kh            & -N \leq j \leq N - k \\
\tilde{x}_{j + k - K} + 2\pi - kh & N - k + 1 \leq j \leq N.
\end{cases}
\]
These points are just a circular shift in $[-\pi, \pi]$ of the points
$\tilde{x}_j$ by $kh$.  It follows that $\tilde{\ell}_k(x) = \hat{\ell}_0(x -
kh)$, where $\hat{\ell}_0$ is the (trigonometric) Lagrange basis function for
the points $\hat{x}_j$ that takes on the value $1$ at $\hat{x}_0$.  One can
easily check that
\[
\hat{x}_j = \begin{cases}
x_j + t_{j + k}h     & -N \leq j \leq N - k \\
x_j + t_{j + k - K}h & N - k + 1 \leq j \leq N,
\end{cases}
\]
where the $x_j$ are the equispaced points \eqref{EQN:EquiPts}, and the
$t_j$ are defined by \eqref{EQN:PertPts}.  Thus, the points $\hat{x}_j$
constitute a set of perturbed equispaced points of the sort that we have been
considering.  In particular, we can use
Lemma~\ref{LEM:l0CrazyProductBound} to bound $\hat{\ell}_0(x - kh)$ and
hence $\tilde{\ell}_k(x)$.  We consider several cases.

\emph{\textit{Case 1}:  $1 \leq p + k \leq N - 1$.} Since $x \in R_p$, it
follows that $x - kh \in R_{p + k}$, which means that $x - kh$ must belong to
one of $R_{p + k - 1}^*$, $R_{p + k}^*$, and $R_{p + k + 1}^*$, again by
Lemma~\ref{LEM:xjsBounds}.  By
Lemma~\ref{LEM:l0CrazyProductBound}, $|\hat{\ell}_0(x - kh)| \leq
\max(M_{p + k - 1}, M_{p + k}, M_{p + k + 1})$.

\emph{\textit{Case 2}:  $p + k = N$ and $(-p - 1/2)h \leq x \leq (-p +
\alpha)h$.}  We have $x - kh \in R_N$.  Moreover, $x - kh \geq (-p - k - 1/2)h
= (-N - 1/2)h = -\pi$, so $x - kh \in [-\pi, 0] \cap R_N$.  Thus, $x - kh$
belongs to either $R_N^*$ or $R_{N - 1}^*$ by Lemma~\ref{LEM:xjsBounds},
and so Lemma~\ref{LEM:l0CrazyProductBound} gives $|\hat{\ell}_0(x -
kh)| \leq \max(M_{N - 1}, M_N)$.

\emph{\textit{Case 3}:  $p + k = N$ and $(-p - 1 - \alpha)h \leq x < (-p -
1/2)h$.}  Again, we have $x - kh \in R_N$, but this time, $x - kh < \pi$.
Nevertheless, $\hat{\ell}_0(x - kh) = \hat{\ell}_0(x - kh + 2\pi)$, and $x - kh
+ 2\pi \in [0, \pi] \cap -R_N$.  By reflecting the problem about $0$ (i.e.,
replacing $\hat{x}_j$ with $-\hat{x}_j$ for each $j$ and $x - kh + 2\pi$ by
$-(x - kh + 2\pi) \in [-\pi, 0] \cap R_N$), and applying
Lemma~\ref{LEM:l0CrazyProductBound}, we obtain $|\hat{\ell}_0(x - kh)|
\leq \max(M_{N - 1}, M_N)$ as in the previous case.

\emph{\textit{Case 4}:  $N + 1 \leq p + k \leq 2N - 1$.}  Just as in the
previous case, we will look not at $\hat{\ell}_0(x - kh)$ but at
$\hat{\ell}_0(x - kh + 2\pi)$.  Noting that $2\pi = Kh$, we see that $x - kh +
2\pi \in -R_{K - (p + k + 1)}$.  Since $x \geq -\pi$ and $k \leq N$, we have $x
- kh + 2\pi \geq -\pi + (K - N)h = h/2 > 0$.  Thus, $x - kh + 2\pi \in [0, \pi]
\cap -R_{K - (p + k + 1)}$.  Reflecting about $0$ as was done in the previous
case and noting that $-(x - kh + 2\pi)$ must belong to one of $R_{K - (p +
k)}^*$, $R_{K - (p + k + 1)}^*$, and $R_{K - (p + k + 2)}^*$ by
Lemma~\ref{LEM:xjsBounds}, we may apply
Lemma~\ref{LEM:l0CrazyProductBound} to conclude that $|\hat{\ell}_0(x -
kh)| \leq \max(M_{K - (p + k)}, M_{K - (p + k + 1)}, M_{K - (p + k + 2)})$.

\emph{\textit{Case 5}:  $p + k = 2N$.}  This is handled exactly the same
as the previous case except that since $x - kh + 2\pi \in [0, \pi] \cap -R_0$,
we have that $-(x - kh + 2\pi)$ can belong only to one of $R_0^*$ and $R_1^*$.
Therefore, $|\hat{\ell}_0(x - kh)| \leq \max(M_0, M_1)$.

For $-N \leq k \leq -1$, the argument is similar.  In this case, the circularly
shifted points $\hat{x}_j$ are
\[
\hat{x}_j = \begin{cases}
\tilde{x}_{j + k} - kh            & -N - k \leq j \leq N \\
\tilde{x}_{j + k + K} - 2\pi - kh & -N \leq j \leq -N - k - 1,
\end{cases}
\]
so that
\[
\hat{x}_j = \begin{cases}
x_j + t_{j + k}h     & -N - k \leq j \leq N \\
x_j + t_{j + k + K}h & -N \leq j \leq -N - k - 1.
\end{cases}
\]
Just as before, we have $\tilde{\ell}_k(x) = \hat{\ell}_0(x - kh)$.  Noting
that $-N \leq p + k \leq N - 1$, the proof again breaks into cases as follows.

\emph{\textit{Case 1}:  $1 \leq p + k \leq N - 1$.}  Just as in the
previous Case 1, we have $x - kh \in R_{p + k}$, and the result follows in
exactly the same way.

\emph{\textit{Case 2}:  $p + k = 0$ and $(-p - 1 -\alpha)h \leq x \leq
-ph$.}  Here, $x - kh \in R_0$, and the restriction on $x$ forces $x - kh \leq
0$, so in fact, $x - kh \in [-\pi, 0] \cap R_0$.  Therefore, $x - kh$ belongs
to one of $R_0^*$ and $R_1^*$ by Lemma~\ref{LEM:xjsBounds}, and so by
Lemma~\ref{LEM:l0CrazyProductBound} we have $|\hat{\ell}_0(x - kh)|
\leq \max(M_0, M_1)$.

\emph{\textit{Case 3}: $p + k = 0$ and $-ph < x \leq (-p + \alpha)h$.} Now
$x - kh \in R_0$, but $0 < x - kh \leq \alpha h$.  To bound $\hat{\ell}_0(x -
kh)$ in this case, we reflect the problem about $0$ as we did in some of the
cases for positive $k$ above.  Since $[-\alpha h, \alpha h] \subset R_0$, we
have $-(x - kh) \in [-\pi, 0] \cap R_0$, and so
Lemma~\ref{LEM:l0CrazyProductBound} tells us that $|\hat{\ell}_0(x -
kh)| \leq \max(M_0, M_1)$ once again.

\emph{\textit{Case 4}: $p + k = -1$ and $(-p - 1 - \alpha)h \leq x \leq
(-p - 1)h$.} In this case, $x - kh \in [-\alpha h, 0]$ and hence belongs to
$[-\pi, 0] \cap R_0$.  Applying Lemma~\ref{LEM:l0CrazyProductBound}, we
have $|\hat{\ell}_0(x - kh)| \leq \max(M_0, M_1)$ just as in the previous two
cases.

\emph{\textit{Case 5}: $p + k = -1$ and $(-p - 1)h < x \leq (-p +
\alpha)h$.} Now, $x - kh \in [0, \pi] \cap -R_0$.  Reflecting in $0$ and using
Lemma~\ref{LEM:l0CrazyProductBound} yet again, we have $|\hat{\ell}_0(x -
kh)| \leq \max(M_0, M_1)$.

\emph{\textit{Case 6}: $-N \leq p + k \leq -2$.}  We have $x - kh \in
-R_{-(p + k + 1)}$.  Since $-R_{-(p + k + 1)} \subset [0, \pi]$, we reflect in
$0$ and observe that, by Lemma~\ref{LEM:xjsBounds}, $-(x - kh)$ belongs
to one of $R_{-(p + k)}^*$, $R_{-(p + k + 1)}^*$, and $R_{-(p + k + 2)}^*$.
Applying Lemma~\ref{LEM:l0CrazyProductBound} one last time, we obtain
$|\hat{\ell}_0(x - kh)| \leq \max(M_{-(p + k)}, M_{-(p + k + 1)}, M_{-(p + k +
2)})$.

All cases have been handled.  The proof is finished.
\end{proof}

\medskip

The point of Lemma~\ref{LEM:lkBounds} is that it allows us to bound
$\tilde{\Lambda}_N$ by summing the bounds of
Lemma~\ref{LEM:l0CrazyProductBound} over $k$ instead of maximizing them
over $k$ and multiplying by $K$ as described previously.

\medskip

\begin{lemma}
We have
\begin{equation}\label{EQN:LebConstSumBound}
\tilde{\Lambda}_N \leq 9\sum_{k = 0}^N M_k.
\end{equation}
\end{lemma}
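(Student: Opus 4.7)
The plan is to plug Lemma~\ref{LEM:lkBounds} into the definition of $\tilde{\Lambda}_N$ term by term and then count carefully how many times each $M_j$ gets added up.

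First, I would observe that by the reflection $x \mapsto -x$, $\tilde{x}_j \mapsto -\tilde{x}_{-j}$, which maps perturbed equispaced points to perturbed equispaced points and leaves the constants $M_k$ invariant, it suffices to bound $\sum_{k=-N}^N |\tilde{\ell}_k(x)|$ for $x \in [-\pi,0]$. Fix such an $x$; since $[-\pi,0] = \bigcup_{p=0}^N R_p^*$, there is a $p$ with $x \in R_p^*$. For each $k \in \{-N,\dots,N\}$, Lemma~\ref{LEM:lkBounds} bounds $|\tilde{\ell}_k(x)|$ by the maximum of at most three of the constants $M_0,\dots,M_N$, the indices of which depend only on $p+k$. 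Bounding each maximum by the corresponding sum and interchanging the order of summation gives
\[
\sum_{k=-N}^N |\tilde{\ell}_k(x)| \;\leq\; \sum_{j=0}^N n_j\, M_j,
\]
where $n_j$ denotes the total number of times $M_j$ appears across all the triples.

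The crux is the bound $n_j \leq 9$ for every $j$. As $k$ runs through $\{-N,\dots,N\}$, the quantity $p+k$ takes each of the $K = 2N+1$ consecutive integer values in $\{p-N,\dots,p+N\}$ exactly once, and Lemma~\ref{LEM:lkBounds} splits these into six cases. Inspection shows that the three ``generic'' cases, corresponding to $p+k$ in $[-N,-2]$, $[1,N-1]$, and $[N+1,2N-1]$, each contribute $M_j$ for at most three values of $p+k$, respectively $\{-j,-j-1,-j-2\}$, $\{j-1,j,j+1\}$, and $\{2N+1-j,2N-j,2N-1-j\}$. The singleton cases $p+k \in \{-1,0,N,2N\}$ contribute to $n_j$ only when $j \in \{0,1,N-1,N\}$. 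A direct case analysis on $j$ then yields $n_j \leq 9$ in every instance, so
\[
\sum_{k=-N}^N |\tilde{\ell}_k(x)| \;\leq\; 9\sum_{j=0}^N M_j
\]
for every $x \in [-\pi,0]$, and hence by the reflection symmetry for every $x \in [-\pi,\pi]$. The claim \eqref{EQN:LebConstSumBound} then follows from \eqref{EQN:PertLebConst}.

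The only real obstacle is the combinatorial bookkeeping of $n_j$: one must chase through all six cases of Lemma~\ref{LEM:lkBounds} and verify that the boundary indices $j \in \{0,1,N-1,N\}$, for which the singleton cases in the lemma contribute extra terms and for which some generic contributions fall out of range, still satisfy $n_j \leq 9$. Every other step is routine.
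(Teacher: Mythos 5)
Your proposal is correct and follows essentially the same route as the paper: apply Lemma~\ref{LEM:lkBounds} for $x$ in a fixed $R_p^*$, replace each maximum by a sum, and verify that each $M_j$ is counted at most $9$ times (the paper does this by enlarging the index range to $-N \leq p+k \leq 2N$ and writing out the resulting sums explicitly, which is the same multiplicity count you describe, and your tallies for the boundary indices $j \in \{0,1,N-1,N\}$ check out, with $n_1 = 9$ being the extremal case). The only cosmetic difference is that you invoke the reflection symmetry at the outset to restrict to $x \in [-\pi,0]$, whereas the paper invokes it at the end after noting the bound is independent of $p$, $x$, and the perturbations.
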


\begin{proof}
Suppose that $x \in [-\pi, 0] \cap R_p^*$, $0 \leq p \leq N$.  We can use
Lemma~\ref{LEM:lkBounds} to bound the sum in
\eqref{EQN:PertLebConst} for this value of $x$ by summing the right-hand
side of \eqref{EQN:lkBounds} over $-N \leq k \leq N$.  This is
equivalent to summing it over the values of $p + k$ such that $-N + p \leq p +
k \leq N + p$, and this is certainly bounded above by the sum over the larger
range $-N \leq p + k \leq 2N$.  Writing $j$ in place of $p + k$, it follows
that
\begin{multline*}
\sum_{k = -N}^N |\tilde{\ell}_k(x)| \leq \sum_{j = -N}^{-2} \max(M_{-j}, M_{-(j + 1)}, M_{-(j + 2)}) + \sum_{j = 1}^{N - 1} \max(M_{j - 1}, M_j, M_{j + 1}) \\
+ \sum_{j = N + 1}^{2N - 1} \max(M_{K - j}, M_{K - (j + 1)}, M_{K - (j + 2)}) \\
+ 3\max(M_0, M_1) + \max(M_{N - 1}, M_N).
\end{multline*}
Since $\max(a, b) \leq a + b$ when $a, b \geq 0$, we can convert the maxima
into sums to obtain
\begin{multline*}
\sum_{k = -N}^N |\tilde{\ell}_k(x)| \leq \sum_{j = 2}^N M_j + \sum_{j = 1}^{N - 1} M_j + \sum_{j = 0}^{N - 2} M_j + \sum_{j = 0}^{N - 2} M_j + \sum_{j = 1}^{N - 1} M_j + \sum_{j = 2}^N M_j \\
+ \sum_{j = 2}^N M_j + \sum_{j = 1}^{N - 1} M_j + \sum_{j = 0}^{N - 2} M_j + 3M_0 + 3M_1 + M_{N - 1} + M_N
\end{multline*}
after simplifying the indices of summation.  We immediately obtain
\[
\sum_{k = -N}^N |\tilde{\ell}_k(x)| \leq 9\sum_{j = 0}^N M_j.
\]
Since the right-hand side of this inequality is independent of $p$, this bound
actually holds for all $x \in [-\pi, 0]$.  Even further, since the $M_j$ are
independent of both $x$ and the points \eqref{EQN:PertPts}, by symmetry,
it holds for all $x \in [-\pi, \pi]$.  The result now follows from
\eqref{EQN:PertLebConst}.
\end{proof}

\medskip

We can now prove Theorem~2.1 from the main article.

\medskip

\emph{Proof of Theorem~2.1.}
We use Lemmas~\ref{LEM:NumerBound} and \ref{LEM:DenomBound} to
bound the right-hand side of \eqref{EQN:LebConstSumBound}.  For $K$
sufficiently large and $k = 0, 1$, we have
\begin{multline}\label{EQN:MkBoundk01}
M_k \leq \frac{5}{1 - 2\alpha} K^{2\alpha}\left|\sin\left(\frac{(k + 1/2 + \alpha)\pi}{K}\right)\right|^{2\alpha} \\
\leq \frac{5}{1 - 2\alpha} K^{2\alpha}\left|\frac{(k + 1/2 + \alpha)\pi}{K}\right|^{2\alpha} \leq \frac{10\pi}{1 - 2\alpha},
\end{multline}
while for $2 \leq k \leq N$,
\begin{multline*}
M_k \leq \frac{3}{1 - 2\alpha} K^{4\alpha - 1} \frac{\left|\sin\left(\frac{(k + 1/2 + \alpha)\pi}{K}\right)\right|^{2\alpha}}{\left|\sin\left(\frac{(k + 1 - 2\alpha)\pi}{K}\right)\sin\left(\frac{(k - 1)\pi}{K}\right)\right|^{1/2 - \alpha}}
\\ \leq \frac{3}{1 - 2\alpha} K^{4\alpha - 1} \frac{\left|\sin\left(\frac{(k + 1/2 + \alpha)\pi}{K}\right)\right|^{2\alpha}}{\left|\sin\left(\frac{(k - 1)\pi}{K}\right)\right|^{1 - 2\alpha}}.
\end{multline*}
In deriving the last expression, we have used the inequality
\[
\left|\sin\left(\frac{(k + 1 - 2\alpha)\pi}{K}\right)\right| \geq \left|\sin\left(\frac{(k - 1)\pi}{K}\right)\right|,
\]
which clearly holds for $2 \leq k \leq N - 1$ and for $k = N$ with $1/4 \leq
\alpha < 1/2$, since in those cases, $(k + 1 - 2\alpha)\pi/K \in [0, \pi/2]$,
and $k + 1 - 2\alpha \geq k > k - 1$.  To see that it holds for $k = N$ with $0
< \alpha < 1/4$ as well, note that in this case
\[
\sin\left(\frac{(N + 1 - 2\alpha)\pi}{K}\right) = \sin\left(\frac{(N + 2\alpha)\pi}{K}\right)
\]
by the symmetry of sine about $\pi/2$.  Since $N + 2\alpha \in [0, \pi/2]$ and
$N + 2\alpha > N - 1$, the inequality follows.

Using the inequalities $|\sin(x)| \leq |x|$ for $x \in \R$ and $|\sin(x)| \geq
(2/\pi)|x|$ for $|x| \leq \pi/2$, we can simplify the bound on $M_k$ for $2
\leq k \leq N$ even further to
\begin{equation}\label{EQN:MkBoundkge2}
M_k \leq \frac{3}{1 - 2\alpha} K^{4\alpha - 1} \frac{\left|\frac{(k + 1/2 + \alpha)\pi}{K}\right|^{2\alpha}}{\left|\frac{2(k - 1)}{K}\right|^{1 - 2\alpha}} \leq \frac{3\pi}{1 - 2\alpha} \frac{(k + 1)^{2\alpha}}{(k - 1)^{1 - 2\alpha}}.
\end{equation}
The result now follows from summing the bounds on the $M_k$ established in
\eqref{EQN:MkBoundk01} and \eqref{EQN:MkBoundkge2} and bounding
the sum by interpreting it as a midpoint rule approximation%
\footnote{We thank Andrew Thompson for suggesting the use of the
midpoint rule instead of a simpler Riemann sum.  The latter yields a bound that
does not have $O(\log K)$ behavior in the limit as $\alpha \to 0$.}
to the integral of a function that is concave-up (note that $N + 1/2 = K/2$):
\begin{multline*}
\sum_{k = 2}^N \frac{(k + 1)^{2\alpha}}{(k - 1)^{1 - 2\alpha}} \leq \int_{3/2}^{N + 1/2} \frac{(x + 1)^{2\alpha}}{(x - 1)^{1 - 2\alpha}} \: dx \\
\leq (K/2 + 1)^{2\alpha} \int_{3/2}^{K/2} \frac{dx}{(x - 1)^{1 - 2\alpha}} = \frac{(K^2/4 - 1)^{2\alpha} - (K/4 + 1/2)^{2\alpha}}{2\alpha} \\
\leq \frac{(K^2/4)^{2\alpha} - (K/4)^{2\alpha}}{2\alpha} = \frac{K^{4\alpha} - K^{2\alpha}}{4^{2\alpha} 2\alpha} \leq \frac{K^{4\alpha} - 1}{2\alpha}.
\end{multline*}
\endproof

\medskip

We close with a word about why our argument falls short of establishing the
stronger bound on $\tilde{\Lambda}_N$ that we conjecture involving
$N^{2\alpha}$ instead of $N^{4\alpha}$.  As summarized in the opening
paragraphs of this appendix, our argument proceeds by choosing the perturbed
points $\tilde{x}_j$ to maximize $|\tilde{\ell}_k|$ for a fixed value of $k$,
bounding the maximum, and then summing the bounds.  This is a different (and
easier) problem than choosing the points to maximize the sum $\sum_{k = -N}^N
|\tilde{\ell}_k|$ and bounding that maximum instead.

In symbols, our argument bounds $\tilde{\Lambda}_N$ by bounding the rightmost
expression in the following chain of inequalities:
\[
\tilde{\Lambda}_N \leq \max_{\tilde{x}_{-N}, \ldots, \tilde{x}_N} \max_{x \in [-\pi, \pi]} \sum_{k = -N}^N |\tilde{\ell}_k(x)| \leq \max_{x \in [-\pi, \pi]} \sum_{k = -N}^N \max_{\tilde{x}_{-N}, \ldots, \tilde{x}_N} |\tilde{\ell}_k(x)|.
\]
The loss enters in the passage to the rightmost expression from the one in the
middle.  To prove the stronger bound, one needs to consider the
$|\tilde{\ell}_k|$ all together at once in the sum instead of individually as
we have done here.

\bibliography{bib/jshort,bib/master,local.bib}
\bibliographystyle{siam}

\end{document}